\newcommand{\Z}{\mathbb{Z}}
\newcommand{\divides}{\bigm|}
\newtheorem{theorem}{Theorem}[section]
\newtheorem{lemma}[theorem]{Lemma}
\newtheorem{definition}[theorem]{Definition}
\newtheorem{conjecture}[theorem]{Conjecture}
\newtheorem{remark}[theorem]{Remark}
\newtheorem{example}[theorem]{Example}
\begin{document}

\title{Some properties of even moments of uniform random walks}
\author{Kevin G. Hare}
\thanks{Research of K. G. Hare was supported by NSERC Grant RGPIN-2014-03154}
\address {Dept. of Pure Mathematics,
University of Waterloo,
Waterloo, Ont., N2L 3G1,
Canada}
\email{kghare@uwaterloo.ca}

\author{Ghislain McKay}
\thanks{Research of Ghislain McKay was supported by NSERC Grant RGPIN-2014-03154, 
    the Department of Pure Mathematics, University of Waterloo,
    and CARMA, University of Newcastle}.
\address {Dept. of Pure Mathematics,
University of Waterloo,
Waterloo, Ont., N2L 3G1,
Canada}
\email{glmckay@uwaterloo.ca}

\begin{abstract}
We build upon previous work on the densities of uniform random walks in higher dimensions, exploring some properties of the even moments of these densities and extending a result about their modularity.
\end{abstract}

\maketitle

\section{Introduction}
Consider a short random walk of $n$ steps in $d$ dimensions where each step is 
    of unit length and whose direction is chosen uniformly.  
Following \cite{BSV15}, we let $\nu = \frac{d}{2} - 1$ and denote by 
    $p_n(\nu; x)$ the probability
    density function of the distance $x$ to the origin of this random walk.
This paper will be concerned with the even moments of these 
    random walks.
\begin{definition}
Define 
\[ W_n(\nu; s) = \int_{0}^\infty x^s p_n(\nu; x) \mathrm{dx} \]
as the $s^{th}$ moment of the probability density function.
\end{definition}

We know that

\begin{theorem}[Borwein, Staub, Vignot, Theorem 2.18, \cite{BSV15}]
    For non-negative integers $k$, $W_{n}(\nu;2k)$ is given by
    \begin{equation*}
        \label{moment_def}
        W_{n}(\nu;2k) = \frac{(k+\nu)! \nu!^{n-1}}{(k+n \nu)!} \sum_{k_1 + \cdots + k_n = k} \binom{k}{k_1,\dots,k_n}\binom{k+n\nu}{k_1+\nu,\dots,k_n+\nu}
    \end{equation*}
\end{theorem}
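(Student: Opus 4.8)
The plan is to read $W_n(\nu;2k)$ probabilistically and pass to the Fourier (characteristic function) side, where the $n$-fold convolution structure of the walk collapses into a product. Writing $\mathbf{S}=\mathbf{X}_1+\cdots+\mathbf{X}_n$ for the sum of $n$ independent vectors, each uniform on the unit sphere $S^{d-1}\subset\mathbb{R}^d$ with $d=2\nu+2$, the density $p_n(\nu;x)$ is the law of $R=|\mathbf{S}|$, so $W_n(\nu;2k)=\mathbb{E}[R^{2k}]$ straight from the definition. The single-step characteristic function is the normalized Bessel function
\[
A_\nu(t)=\nu!\,(2/t)^\nu J_\nu(t)=\sum_{m\ge 0}\frac{(-1)^m\,\nu!}{m!\,(m+\nu)!}\Big(\frac{t}{2}\Big)^{2m},
\]
where $\nu!=\Gamma(\nu+1)$, and by independence $\phi(\mathbf{t}):=\mathbb{E}[e^{i\mathbf{t}\cdot\mathbf{S}}]=A_\nu(|\mathbf{t}|)^n$. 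For non-integral $d$ one simply takes this Bessel representation, which is how $p_n(\nu;x)$ is defined, as the starting point.

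Next I would extract the even moments by differentiating at the origin. Since $\Delta_{\mathbf{t}}e^{i\mathbf{t}\cdot\mathbf{S}}=-|\mathbf{S}|^2e^{i\mathbf{t}\cdot\mathbf{S}}$, iterating gives $\mathbb{E}[R^{2k}]=(-1)^k(\Delta^k\phi)(0)$, where $\Delta$ is the Laplacian. Because $\phi$ is radial, I would use the radial form $\Delta=\partial_t^2+\frac{2\nu+1}{t}\partial_t$ and record its action on even monomials,
\[
\Delta\,t^{2j}=4j(j+\nu)\,t^{2j-2}.
\]
Iterating, $\Delta^k t^{2j}$ vanishes at $t=0$ unless $j=k$, in which case $\Delta^k t^{2k}=4^k\,k!\,(k+\nu)!/\nu!$. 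Hence only the $t^{2k}$ term of the Taylor series of $A_\nu(t)^n$ survives, and
\[
\mathbb{E}[R^{2k}]=(-1)^k\,\big[t^{2k}\big]A_\nu(t)^n\cdot 4^k\,k!\,\frac{(k+\nu)!}{\nu!}.
\]

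Finally I would compute the Taylor coefficient from the product. Expanding $A_\nu(t)^n$ and collecting powers,
\[
\big[t^{2k}\big]A_\nu(t)^n=\frac{(-1)^k\,\nu!^{\,n}}{4^k}\sum_{k_1+\cdots+k_n=k}\ \prod_{i=1}^n\frac{1}{k_i!\,(k_i+\nu)!},
\]
so that $\mathbb{E}[R^{2k}]=\nu!^{\,n-1}\,k!\,(k+\nu)!\sum_{\sum k_i=k}\prod_i 1/\big(k_i!\,(k_i+\nu)!\big)$. It then remains to recognize the right-hand side of the theorem as the same quantity: expanding the two multinomials as $k!/\prod k_i!$ and $(k+n\nu)!/\prod(k_i+\nu)!$ (the latter valid since $\sum(k_i+\nu)=k+n\nu$) and multiplying by the prefactor $(k+\nu)!\,\nu!^{\,n-1}/(k+n\nu)!$ cancels $(k+n\nu)!$ and reproduces exactly this sum. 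The routine algebra of this last matching is straightforward; the genuinely delicate points are justifying the moment--Laplacian identity (existence of the even moments and the interchange of expectation with differentiation at $\mathbf{t}=0$) and, for non-integral $d$, checking that the Bessel-product definition of $p_n(\nu;x)$ legitimately plays the role of the characteristic function so the same coefficient extraction applies. I expect the convergence and analyticity bookkeeping at the origin to be the main obstacle, while the combinatorial identity itself falls out immediately.
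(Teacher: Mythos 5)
Your argument is correct, but there is nothing in the paper to compare it against: this statement is an imported result, quoted with attribution to Borwein--Straub--Vignat (Theorem 2.18 of \cite{BSV15}), and the paper supplies no proof of its own. Your derivation checks out in detail: the single-step characteristic function of a uniform point on $S^{d-1}$ is indeed $\nu!\,(2/t)^\nu J_\nu(t)$ with $\nu=d/2-1$; the interchange of $\Delta^k$ with expectation is unproblematic because $|\mathbf{S}|\le n$ is bounded, so $\phi$ is an even entire function of $t=|\mathbf{t}|$ and coefficient extraction is legitimate; the radial computation $\Delta t^{2j}=4j(j+\nu)t^{2j-2}$ and the resulting identity $\mathbb{E}[R^{2k}]=\nu!^{\,n-1}k!\,(k+\nu)!\sum\prod_i 1/(k_i!\,(k_i+\nu)!)$ are right; and this matches the stated multinomial form after expanding $\binom{k}{k_1,\dots,k_n}\binom{k+n\nu}{k_1+\nu,\dots,k_n+\nu}$ and cancelling $(k+n\nu)!$. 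Two remarks on scope. First, since this paper only ever uses integer $\nu\ge 0$, i.e.\ even integer dimension $d=2\nu+2$, your probabilistic reading is fully adequate here; the cited source proves the formula for general $\nu$ via Bessel-integral (Hankel transform) representations, which is what your closing caveat about non-integral $d$ would require, and in that generality one cannot lean on a literal random walk. Second, the analytic ``bookkeeping'' you flag as the main obstacle is in fact the easy part in the bounded-step setting: boundedness of $|\mathbf{S}|$ gives analyticity of $\phi$ at once, so your proof is complete as sketched, and arguably more elementary than the machinery in \cite{BSV15}.
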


\begin{theorem}[Borwein, Staub, Vignot Example 2.23, \cite{BSV15}]
    For given $\nu$, let $A(\nu)$ be the infinite lower triangular matrix with entries
    \begin{equation*}
        A_{k,j}(\nu) = \binom{k}{j} \frac{(k+\nu)! \nu!}{(k-j+\nu)! (j+\nu)!}
    \end{equation*}
    for row indices $k = 0,1,2,\dots$ and columns entries $j = 0,1,2,\dots$. 
Then the moments $W_{n+1}(\nu; 2k)$ are given by the row sums of $A(\nu)^n$.
\end{theorem}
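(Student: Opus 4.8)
The plan is to show that left-multiplication by $A(\nu)$ implements the passage from $n$ steps to $n+1$ steps at the level of the even-moment sequences, and then to identify the correct seed. I would fix $\nu$, regard the even moments as an infinite column vector $w_n = (W_n(\nu;2k))_{k\ge 0}$, and reduce the theorem to the single identity
\[
W_{n+1}(\nu;2k) = \sum_{j=0}^{k} A_{k,j}(\nu)\, W_{n}(\nu;2j),
\]
that is, $w_{n+1} = A(\nu)\,w_n$. Granting this, the result is immediate: iterating gives $w_{n+1} = A(\nu)^n w_1$, and the row sums of $A(\nu)^n$ are exactly the entries of $A(\nu)^n\mathbf{1}$ for the all-ones vector $\mathbf{1}$, so it only remains to verify that $w_1 = \mathbf{1}$.

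The seed is easy. Setting $n=1$ in the Borwein--Staub--Vignot moment formula collapses the sum over $k_1+\cdots+k_n=k$ to the single term $k_1=k$, leaving $W_1(\nu;2k) = \binom{k}{k}\binom{k+\nu}{k+\nu} = 1$ for every $k$. Hence $w_1 = \mathbf{1}$, in agreement with the elementary fact that a one-step walk lands at distance exactly $1$ from the origin.

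The substance is the recursion, which I would prove purely combinatorially from the closed form. Starting from the formula for $W_{n+1}(\nu;2k)$, I would partition the index set $k_1+\cdots+k_{n+1}=k$ according to the value $j=k_1+\cdots+k_n$, so that $k_{n+1}=k-j$. Both multinomial coefficients then factor through $j$: I would use $\binom{k}{k_1,\dots,k_{n+1}} = \binom{k}{j}\binom{j}{k_1,\dots,k_n}$ together with the shifted splitting $\binom{k+(n+1)\nu}{k_1+\nu,\dots,k_{n+1}+\nu} = \binom{k+(n+1)\nu}{j+n\nu}\binom{j+n\nu}{k_1+\nu,\dots,k_n+\nu}$, where the ``total'' of the first $n$ shifted blocks is $(k_1+\nu)+\cdots+(k_n+\nu)=j+n\nu$. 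The factors $\binom{k}{j}$ and $\binom{k+(n+1)\nu}{j+n\nu}$ then pull out of the inner sum over $k_1+\cdots+k_n=j$, and that inner sum is precisely the one appearing in $W_n(\nu;2j)$, so I can substitute it out in favour of $W_n(\nu;2j)$. After cancelling prefactors---$(k+(n+1)\nu)!$ against $\binom{k+(n+1)\nu}{j+n\nu}$, the leftover $(j+n\nu)!$ against the $W_n$ normalization, and $\nu!^{\,n}/\nu!^{\,n-1}=\nu!$---the coefficient of $W_n(\nu;2j)$ collapses to $\binom{k}{j}\frac{(k+\nu)!\,\nu!}{(k-j+\nu)!\,(j+\nu)!}=A_{k,j}(\nu)$, which is the desired recursion.

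The main obstacle is bookkeeping rather than conceptual: one must carry out the two multinomial factorizations correctly---especially tracking the $+\nu$ shifts in the second coefficient---and then check that every factorial prefactor cancels cleanly. No convergence or analytic input is needed, since $A(\nu)$ is lower triangular and all the sums in play are finite.
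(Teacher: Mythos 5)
Your proposal is correct, but there is nothing in the paper to compare it against: this statement is quoted verbatim from Borwein--Straub--Vignat (Example 2.23 of \cite{BSV15}) and the paper offers no proof of it, using it only as an input. Your argument instead derives it from the other quoted result (the closed form for $W_n(\nu;2k)$), which is a legitimate, self-contained route. The key steps all check out: with $j=k_1+\cdots+k_n$ and $k_{n+1}=k-j$, the factorizations
$\binom{k}{k_1,\dots,k_{n+1}}=\binom{k}{j}\binom{j}{k_1,\dots,k_n}$
and
$\binom{k+(n+1)\nu}{k_1+\nu,\dots,k_{n+1}+\nu}=\binom{k+(n+1)\nu}{j+n\nu}\binom{j+n\nu}{k_1+\nu,\dots,k_n+\nu}$
are valid (the last part is $k_{n+1}+\nu=k-j+\nu$, the complement of $j+n\nu$ in $k+(n+1)\nu$), and after substituting the inner sum by $\frac{(j+n\nu)!}{(j+\nu)!\,\nu!^{n-1}}W_n(\nu;2j)$ the coefficient of $W_n(\nu;2j)$ is
\begin{equation*}
\frac{(k+\nu)!\,\nu!^{n}}{(k+(n+1)\nu)!}\,\binom{k}{j}\,
\frac{(k+(n+1)\nu)!}{(j+n\nu)!\,(k-j+\nu)!}\cdot
\frac{(j+n\nu)!}{(j+\nu)!\,\nu!^{n-1}}
=\binom{k}{j}\frac{(k+\nu)!\,\nu!}{(k-j+\nu)!\,(j+\nu)!}
=A_{k,j}(\nu),
\end{equation*}
so $w_{n+1}=A(\nu)w_n$; combined with the seed $w_1=\mathbf{1}$ (immediate from the $n=1$ case of the moment formula, or from the fact that a one-step walk has distance exactly $1$), iteration gives $w_{n+1}=A(\nu)^n\mathbf{1}$, i.e.\ the row sums of $A(\nu)^n$, exactly as claimed.
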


For a good history of these moments, and random walks in general, see 
\cite{BNSW11, BSV15, BSW13, BSWZ12}.

\begin{example}
For example, the upper corner of $A(0), A(1)$ and $A(2)$ are given below.
\[ A(0) := 
\left[ \begin{array}{ccccccccc} 
1&0&0&0&0&0&0&0&\dots\\
1&1&0&0&0&0&0&0&\\
1&4&1&0&0&0&0&0&\\
1&9&9&1&0&0&0&0&\\
1&16&36&16&1&0&0&0&\\
1&25&100&100&25&1&0&0&\\
1&36&225&400&225&36&1&0&\\
1&49&441&1225&1225&441&49&1&\\
\vdots&&&&&&&&\ddots
\end{array}\right] 
 \]
\[ A(1) := 
\left[ \begin{array}{ccccccccc} 
1&0&0&0&0&0&0&0&\dots\\
1&1&0&0&0&0&0&0&\\
1&3&1&0&0&0&0&0&\\
1&6&6&1&0&0&0&0&\\
1&10&20&10&1&0&0&0&\\
1&15&50&50&15&1&0&0&\\
1&21&105&175&105&21&1&0&\\
1&28&196&490&490&196&28&1&\\
\vdots&&&&&&&&\ddots
\end{array}\right] 
 \]
\[ A(2) := 
\left[ \begin{array}{ccccccccc} 
1&0&0&0&0&0&0&0&\dots\\
1&1&0&0&0&0&0&0&\\
1&8/3&1&0&0&0&0&0&\\
1&5&5&1&0&0&0&0&\\
1&8&15&8&1&0&0&0&\\
1&35/3&35&35&35/3&1&0&0&\\
1&16&70&112&70&16&1&0&\\
1&21&126&294&294&126&21&1&\\
\vdots&&&&&&&&\ddots
\end{array}\right] 
\]

\end{example}

The lower triangular entries of $A(0)$ are the squares of the binomial coefficients $\binom{k}{j}$ and those in $A(1)$ are known as the Naryana numbers \cite[A001263]{Slo15}.
Using these observations about $A(0)$ and $A(1)$, it is easy to observe that 
    all of the coefficients of $A(0)$ and $A(1)$ are integers.
A quick glance at $A(2)$ shows that this is not always true.
It was stated that $A_{k,j}(2) \in \frac{1}{3} \Z$ in \cite{BSV15}.

We define 
    \[ r_\nu := \min\left\{r > 0 : A_{k,j}(\nu) \in \frac{1}{r} \Z, j,k \geq 0 \right\}. \]
Using this notation we see that $r_0 = r_1 =1$ and $r_2 = 3$.
It is not immediately that $r_\nu$ is well defined and finite for all $\nu$, 
    (although we will show that this is the case).

In Section \ref{sec:A Z} we show that 
\begin{theorem}
\label{thm:main}
For $\nu \geq 1$ we have $r_\nu \divides \frac{(2 \nu -1)!}{\nu!}$.
\end{theorem}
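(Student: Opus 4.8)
The plan is to reduce Theorem~\ref{thm:main} to a single clean integrality assertion and then to prove that assertion one prime at a time using Legendre's formula.

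First I would rewrite the matrix entry in a symmetric form. Setting $a = j$ and $b = k-j$ (both nonnegative on the lower-triangular part), one has $k = a+b$, $\ k-j+\nu = b+\nu$, $\ j+\nu = a+\nu$, $\ k+\nu = a+b+\nu$, and $\binom{k}{j} = \frac{(a+b)!}{a!\,b!}$, so that
\[ \frac{(2\nu-1)!}{\nu!}\,A_{k,j}(\nu) \;=\; (2\nu-1)!\,\frac{(a+b)!\,(a+b+\nu)!}{a!\,b!\,(a+\nu)!\,(b+\nu)!} \;=:\; M(a,b). \]
The integer factor $\binom{k}{j}$ must be kept: it is genuinely needed, since for $\nu=1$ the expression with that factor removed is already non-integral, yet every $A_{k,j}(1)$ is an integer. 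The whole of Theorem~\ref{thm:main} then follows once I show $M(a,b)\in\Z$ for all $a,b\ge 0$. Indeed, this says that every denominator of $A_{k,j}(\nu)$ divides $\frac{(2\nu-1)!}{\nu!}$; hence the least common multiple of these denominators, which is precisely $r_\nu$, divides it as well. In particular this simultaneously shows that $r_\nu$ is well defined and finite, settling the remark preceding the theorem.

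To prove $M(a,b)\in\Z$ I would fix a prime $p$ and show $v_p(M(a,b))\ge 0$, where $v_p$ denotes the $p$-adic valuation. By Legendre's formula $v_p(n!) = \sum_{i\ge 1}\lfloor n/p^i\rfloor$, it suffices to establish, for every prime power $q = p^i$, the single-level inequality
\[ \Big\lfloor\tfrac{2\nu-1}{q}\Big\rfloor + \Big\lfloor\tfrac{a+b}{q}\Big\rfloor + \Big\lfloor\tfrac{a+b+\nu}{q}\Big\rfloor \;\ge\; \Big\lfloor\tfrac{a}{q}\Big\rfloor + \Big\lfloor\tfrac{b}{q}\Big\rfloor + \Big\lfloor\tfrac{a+\nu}{q}\Big\rfloor + \Big\lfloor\tfrac{b+\nu}{q}\Big\rfloor, \]
since summing over $i\ge 1$ yields $v_p(M(a,b))\ge 0$, and this for every $p$ gives integrality.

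The heart of the argument, and the step I expect to be the main obstacle, is this single-level inequality, which I would attack through Kummer's interpretation of these floor differences as carries. Writing $\alpha,\beta,\gamma$ for the residues of $a,b,\nu$ modulo $q$, and letting $c_1,c_2,c_3$ record whether a carry occurs in $a+b$, $a+\nu$, $b+\nu$ respectively (with the analogous bookkeeping for $a+b+\nu$ and for the factor $(2\nu-1)!$), the inequality collapses to showing that an expression of the shape $\lfloor\nu/q\rfloor + (\text{carry terms})$ is nonnegative. The delicate configuration is when $c_2=c_3=1$ while the carry $c_1$ in $a+b$ vanishes, where the naive count runs a deficit of one; here the key observation is that $\alpha+\gamma\ge q$ and $\beta+\gamma\ge q$ together with $\alpha+\beta<q$ force $2\gamma>q$, which supplies exactly one compensating carry coming from $(2\nu-1)!$. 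Treating the case $q\mid\nu$ separately (where $\lfloor\nu/q\rfloor\ge 1$ absorbs any deficit) and otherwise running a short case split on the residues then closes out every configuration, establishing the inequality and hence the theorem.
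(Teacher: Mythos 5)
Your proposal is correct, and it takes a genuinely different route from the paper. The paper's proof splits into two cases: for $0 \le j \le \nu-1$ it argues directly from $(j+\nu)! \divides (2\nu-1)!$, and for $j \ge \nu$ it writes $A_{k,j}(\nu)=\frac{(k+\nu)\cdots(k+1)}{(j+\nu)\cdots(j+1)\cdot j\cdots(j-\nu+1)}\binom{k}{j-\nu}\binom{k}{j}\nu!$, observes that clearing either block of $\nu$ consecutive factors in the denominator leaves an integer, so the reduced denominator divides $\gcd\bigl((j+1)\cdots(j+\nu),\,(j-\nu+1)\cdots j\bigr)$, and then invokes a separate lemma (Lemma \ref{lem:gcd}, proved by a $p$-adic valuation argument involving translations and the Chinese Remainder Theorem) asserting this gcd divides $(2\nu-1)!$. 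You instead fold everything into the single symmetric quantity $M(a,b)$ and certify $v_p(M(a,b))\ge 0$ level by level via Legendre's formula; writing $a=qA+\alpha$, $b=qB+\beta$, $\nu=qN+\gamma$, your inequality collapses to $N+\lfloor(2\gamma-1)/q\rfloor+c_1+d\ge c_2+c_3$, where $d=\lfloor(\alpha+\beta+\gamma)/q\rfloor\ge\max(c_1,c_2,c_3)$, and your case analysis does close: when $c_2=c_3=1$ and $c_1=0$ you correctly extract $2\gamma>q$, hence $\lfloor(2\gamma-1)/q\rfloor\ge 1$, covering the deficit; when $\gamma=0$ (the only way the term $\lfloor(2\gamma-1)/q\rfloor=-1$ appears) one has $c_2=c_3=0$ and, since $\nu\ge 1$, also $N\ge 1$; all remaining configurations are immediate since $d\ge 1$ whenever some $c_i=1$. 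Your reduction to the theorem is also sound: $M(a,b)\in\Z$ for all $a,b$ is exactly the statement that every entry lies in $\tfrac{\nu!}{(2\nu-1)!}\Z$, and the gcd/lcm argument then gives $r_\nu \divides \tfrac{(2\nu-1)!}{\nu!}$ (and finiteness of $r_\nu$), a point the paper itself passes over quickly. What your approach buys is uniformity and economy: no case split on $j$ versus $\nu$, no gcd lemma, and none of the somewhat delicate WLOG/translation/CRT reasoning in the paper's Lemma \ref{lem:gcd}. What the paper's route buys in exchange is the refined per-prime bound \eqref{eq:star} on the gcd, which it exploits later for specific $\nu$ (Table \ref{tab:compare}, the factor $7!/2$ in the $\nu=4$ proof, and the remark that $r_5 \divides 2^3\cdot 3^2\cdot 7$); your argument as stated targets only the divisor $(2\nu-1)!/\nu!$, though tracking the slack in your floor inequalities over all levels would recover comparable refinements.
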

This is not best possible.
In Section \ref{sec:backwards} we prove the opposite direction
\begin{theorem}
\label{thm:backwards}
For $\nu \geq 1$ we have $\binom{2 \nu -1}{\nu} \divides r_\nu$.
\end{theorem}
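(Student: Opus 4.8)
The plan is to first rewrite the matrix entries in a form that exposes their denominators. Cancelling factorials in the definition gives the identity
\[
A_{k,j}(\nu) = \frac{\binom{k}{j}\binom{k+\nu}{j}}{\binom{j+\nu}{\nu}},
\]
so the numerator of $A_{k,j}(\nu)$ is a product of two integers and the denominator divides $\binom{j+\nu}{\nu}$. Since $A_{k,j}(\nu)\in\frac1r\Z$ precisely when $r$ is a multiple of the reduced denominator of $A_{k,j}(\nu)$, the quantity $r_\nu$ is exactly the least common multiple of these reduced denominators over all $k,j$. Consequently it suffices to prove, for every prime $p$, that some single entry has reduced denominator divisible by $p$ to the power $v_p\!\binom{2\nu-1}{\nu}$, where $v_p$ denotes the $p$-adic valuation; taking the least common multiple over primes then yields $\binom{2\nu-1}{\nu}\divides r_\nu$.

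The key observation is that the target $\binom{2\nu-1}{\nu}$ is exactly the denominator $\binom{j+\nu}{\nu}$ occurring in the column $j=\nu-1$. So for a fixed prime $p$ I would work in that column and choose $k$ so that both factors $\binom{k}{\nu-1}$ and $\binom{k+\nu}{\nu-1}$ in the numerator are $p$-adic units; then no cancellation occurs and the reduced denominator carries the full power $p^{v_p(\binom{2\nu-1}{\nu})}$. My candidate is $k=p^m-1$ with $m$ chosen so that $p^m>\nu$. In base $p$ this $k$ consists of $m$ digits all equal to $p-1$, while $k+\nu=p^m+(\nu-1)$ has a single leading digit $1$ followed by the digits of $\nu-1$.

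To verify the units claim I would invoke Lucas' theorem. Since $\nu-1<p^m$, every base-$p$ digit of $\nu-1$ is at most $p-1$ and hence is dominated by the corresponding all-$(p-1)$ digit of $k$, giving $\binom{k}{\nu-1}\equiv\prod_i\binom{p-1}{(\nu-1)_i}\not\equiv 0\pmod p$ because $\binom{p-1}{d}\equiv(-1)^d$. Likewise the digits of $\nu-1$ are matched exactly by the low digits of $k+\nu$, and in the top position $\nu-1$ has digit $0$, which is dominated by the digit $1$ of $k+\nu$, so $\binom{k+\nu}{\nu-1}\not\equiv 0\pmod p$ as well. Thus $v_p\!\big(A_{k,\nu-1}(\nu)\big)=-v_p\!\binom{2\nu-1}{\nu}$ for this $k$, which is exactly what is needed.

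The main obstacle is precisely this digit bookkeeping: one must pick $k$ whose base-$p$ expansion simultaneously dominates that of $\nu-1$ and keeps $k+\nu$ dominating $\nu-1$ as well, and the naive choice $k\equiv\nu-1\pmod{p^m}$ fails for exactly the primes dividing $\binom{2\nu-1}{\nu}$, namely those for which $2\nu-1$ does not dominate $\nu-1$. The all-$(p-1)$ choice $k=p^m-1$ is engineered to sidestep this by forcing a single clean carry, and confirming that it behaves correctly for every prime $p$ (together with the trivial boundary case $\nu=1$, where $\binom{2\nu-1}{\nu}=1$) is the technical heart of the argument.
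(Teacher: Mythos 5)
Your proposal is correct and follows essentially the same route as the paper: both pick the witness entry $A_{p^m-1,\,\nu-1}(\nu)$, whose denominator $\binom{2\nu-1}{\nu-1}=\binom{2\nu-1}{\nu}$ is the target, and show the two numerator binomials $\binom{p^m-1}{\nu-1}$ and $\binom{p^m+\nu-1}{\nu-1}$ are units mod $p$, so the full $p$-power of the denominator survives reduction. The only difference is cosmetic: you verify the unit claim via Lucas' theorem on base-$p$ digits, while the paper matches factors of numerator and denominator directly modulo $p^m$.
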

We conjecture that this is in fact best possible.
That is, we conjecture

\begin{conjecture}
\label{conj:main}
For $\nu \geq 1$ we have $r_\nu = \binom{2 \nu -1}{\nu}$.
\end{conjecture}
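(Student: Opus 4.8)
The plan is to prove the matching upper bound $r_\nu \divides \binom{2\nu-1}{\nu}$; combined with Theorem~\ref{thm:backwards}, which supplies $\binom{2\nu-1}{\nu} \divides r_\nu$, this yields the equality asserted in Conjecture~\ref{conj:main}. First I would record a closed form for the entries of $A(\nu)$. Cancelling factorials in the definition gives
\[ A_{k,j}(\nu) = \frac{\binom{k}{j}\binom{k+\nu}{j}}{\binom{j+\nu}{\nu}}, \]
which returns $\binom{k}{j}^2$ when $\nu=0$, the Narayana numbers when $\nu=1$, and the tabulated entries of $A(2)$. Since the only denominator occurring is $\binom{j+\nu}{\nu}$, the bound $r_\nu \divides \binom{2\nu-1}{\nu}$ is equivalent to the divisibility
\[ \binom{j+\nu}{\nu} \divides \binom{2\nu-1}{\nu}\binom{k}{j}\binom{k+\nu}{j} \qquad \text{for all } k \ge j \ge 0. \]

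To attack this I would fix a prime $p$ and work with its valuation $v_p$, invoking Kummer's theorem: $v_p\binom{a+b}{a}$ equals the number of carries when $a$ and $b$ are added in base $p$. Counting in two ways the carries generated by adding the three numbers $j$, $k-j$, and $\nu$ yields the identity
\[ v_p\binom{k}{j} + v_p\binom{k+\nu}{\nu} = v_p\binom{j+\nu}{\nu} + v_p\binom{k+\nu}{j+\nu}. \]
Using it to eliminate $v_p\binom{j+\nu}{\nu}$ collapses the target into a statement confined to a single row $N = k+\nu$ of Pascal's triangle, namely
\[ v_p\binom{N}{\nu} \le v_p\binom{N}{j} + v_p\binom{N}{j+\nu} + v_p\binom{2\nu-1}{\nu} \]
for every prime $p$, every $N \ge \nu$, and every $0 \le j \le N-\nu$; equivalently $\binom{N}{\nu} \divides \binom{N}{j}\binom{N}{j+\nu}\binom{2\nu-1}{\nu}$. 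This reduction is the heart of the argument: it trades a family of rational matrix entries for a uniform integer divisibility inside one binomial row, with $\binom{2\nu-1}{\nu}$ entering precisely as the allowed slack.

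The final and hardest step is to establish this single-row inequality by carry bookkeeping. Writing $c(m) = v_p\binom{N}{m}$ for the carry count of $m + (N-m)$, the three-fold grouping of the carries of $j$, $\nu$, and $N-j-\nu$ gives
\[ c(\nu) - c(j) - c(j+\nu) = v_p\binom{N-j}{\nu} + v_p\binom{j+\nu}{\nu} - v_p\binom{N-\nu}{j} - T, \]
where $T \ge 0$ is the total number of carries in the simultaneous base-$p$ addition of these three numbers. One cannot simply discard $T$: small cases (already $\nu=2$, $p=3$) show the resulting weaker inequality to be false, so the joint carries must be tracked exactly. I would therefore prove the inequality by induction on the number of base-$p$ digits of $\nu$, peeling off the least significant digit and analysing, position by position, how a carry contributing to $c(\nu)$ is charged either to a carry of $\binom{N}{j}$, a carry of $\binom{N}{j+\nu}$, or to the fixed budget $v_p\binom{2\nu-1}{\nu}$, which is exactly the carry count of $(\nu-1)+\nu$.

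I expect the main obstacle to be this last charging argument. The difficulty is genuinely combinatorial and uniform in $N$ and $j$: lowering the carries of one of the two right-hand binomials by a clever choice of $j$ tends to push carries into the other, and the precise trade-off is dictated by the digit pattern of $\nu$ through the carry-into-the-next-position mechanism. That the slack cannot be improved is consistent with Theorem~\ref{thm:backwards}, whose extremal rows realise equality for each $p$; the task is to show that no row and no $j$ can ever exceed it.
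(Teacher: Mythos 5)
First, a point of orientation: the statement you set out to prove is a \emph{conjecture} in the paper, not a theorem. The authors prove only the two one-sided results $r_\nu \divides (2\nu-1)!/\nu!$ (Theorem \ref{thm:main}) and $\binom{2\nu-1}{\nu} \divides r_\nu$ (Theorem \ref{thm:backwards}), plus the cases $\nu \le 4$ (Theorems \ref{thm:nu=3} and \ref{thm:nu=4}) and numerical verification; the equality itself is open. Your overall strategy (prove the matching upper bound $r_\nu \divides \binom{2\nu-1}{\nu}$ entrywise and cite Theorem \ref{thm:backwards}) is the natural one, and your reduction is correct as far as it goes: the identity $\binom{k}{j}\binom{k+\nu}{\nu} = \binom{j+\nu}{\nu}\binom{k+\nu}{j+\nu}$ (both sides equal $(k+\nu)!/\bigl(j!\,(k-j)!\,\nu!\bigr)$) is valid, and using it with $N = k+\nu$ your target becomes
\[ v_p\binom{N}{\nu} \le v_p\binom{N}{j} + v_p\binom{N}{j+\nu} + v_p\binom{2\nu-1}{\nu}. \]
But note what this is: it is exactly the assertion that the paper's third representation $A_{k,j}(\nu) = \binom{N}{j}\binom{N}{j+\nu}\binom{N}{\nu}^{-1}$ has denominator dividing $\binom{2\nu-1}{\nu}$. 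So your ``heart of the argument'' is not a reduction to something weaker; it is an exact restatement of Conjecture \ref{conj:main} in different notation.

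The genuine gap is the final step, which you yourself flag as the main obstacle: the charging/induction argument on the base-$p$ digits of $\nu$ is never carried out. Nothing in the proposal specifies how a carry contributing to $v_p\binom{N}{\nu}$ is charged to a carry of $\binom{N}{j}$, to one of $\binom{N}{j+\nu}$, or to the budget $v_p\binom{2\nu-1}{\nu}$, uniformly in $N$ and $j$ — and that uniformity is precisely where the open problem lives. The paper's own partial results illustrate the difficulty: the gcd estimate of Lemma \ref{lem:gcd} yields only the weaker bound $(2\nu-1)!/\nu!$, and even for $\nu = 3$ and $\nu = 4$ the authors must run case analyses modulo $8$ and modulo $9$, extracting the missing factors of $2$ and $3$ from the binomials $\binom{k}{j}$, $\binom{k+\nu}{j}$, $\binom{k}{j-\nu}$, $\binom{k+\nu}{j+\nu}$ via Kummer's theorem, with \emph{which} binomial supplies the factor depending on the residues of both $k$ and $j$. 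That is exactly the trade-off your sketch acknowledges (``lowering the carries of one of the two right-hand binomials tends to push carries into the other'') but does not resolve. As it stands, your proposal establishes a correct equivalent reformulation of the conjecture and an outline of hope; it is not a proof, and the statement remains open.
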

We present evidence for this conjecture in Section 
    \ref{sec:small} and \ref{sec:evidence}.

Next we consider a result by Borwein, Nuyens, Straub and Wan in \cite{BNSW11} 
    about the modularity of moments.
They showed that 
\begin{theorem}
    For primes $p$, we have
    \begin{equation*}
        W_n(0;2p) \equiv n \mod p.
    \end{equation*}
\end{theorem}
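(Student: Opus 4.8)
The plan is to prove the congruence $W_n(0;2p) \equiv n \pmod{p}$ by working with the explicit formula for the moments at $\nu = 0$. Setting $\nu = 0$ and $k = p$ in the Borwein--Staub--Vignot formula (Theorem 0.1 of the excerpt), the prefactor $\frac{(k+\nu)!\,\nu!^{n-1}}{(k+n\nu)!}$ collapses to $\frac{p!}{p!} = 1$, so that
\[
W_n(0; 2p) = \sum_{k_1 + \cdots + k_n = p} \binom{p}{k_1,\dots,k_n}^2,
\]
since both multinomial factors coincide when $\nu = 0$. Thus I would reduce the problem to a congruence among squared multinomial coefficients modulo the prime $p$.

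The key observation is that $\binom{p}{k_1,\dots,k_n} = \frac{p!}{k_1!\cdots k_n!}$ is divisible by $p$ unless one of the $k_i$ equals $p$ (and the rest are zero), because the numerator $p!$ contributes exactly one factor of $p$ which survives only when no $k_i!$ in the denominator absorbs it, i.e.\ when some $k_i = p$. Consequently, modulo $p$ the squared multinomial coefficient $\binom{p}{k_1,\dots,k_n}^2$ is $\equiv 0$ whenever the term is ``spread out,'' and the only surviving contributions come from the $n$ compositions in which a single $k_i = p$ and the others vanish. Each such term contributes $\binom{p}{p}^2 = 1$, giving exactly $n$ surviving terms, and hence $W_n(0;2p) \equiv n \pmod p$.

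The main step requiring care is the divisibility claim: I would invoke Kummer's theorem (or the elementary fact that $p \mid \binom{p}{j}$ for $0 < j < p$) to verify that $p \mid \binom{p}{k_1,\dots,k_n}$ whenever at least two of the $k_i$ are positive, and that the single factor of $p$ in the numerator is genuinely cancelled precisely in the degenerate compositions. The squaring is harmless here---once a factor of $p$ divides the multinomial coefficient, its square is divisible by $p^2$, so \emph{a fortiori} by $p$. The slight subtlety is to confirm there is no accidental extra term; but since $p$ is prime, $p! = p \cdot (p-1)!$ with $p \nmid (p-1)!$, the $p$-adic valuation of $\binom{p}{k_1,\dots,k_n}$ is at least $1$ unless some $k_i = p$, and this is where the argument hinges.

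I expect the one genuine obstacle to be bookkeeping around edge cases rather than any deep difficulty: one must ensure the count of degenerate compositions is exactly $n$ (one for each coordinate carrying the full weight $p$) and that no two such compositions coincide, which is immediate for $p \geq 1$. With that settled, the congruence follows directly, and the proof is essentially a valuation computation combined with a careful enumeration of the terms that survive reduction modulo $p$.
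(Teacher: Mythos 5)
Your proposal is correct and follows essentially the same route as the paper: the paper proves this statement as the special case $\nu = 0$, $k = p$ of its Theorem \ref{thm:mod}, by the identical mechanism of expanding the moment as a sum over compositions $k_1 + \cdots + k_n = p$, using the $p$-adic valuation of the factorials to show every term with all $k_i < p$ vanishes modulo $p$, and observing that the $n$ degenerate compositions (one $k_i = p$, the rest zero) each contribute $1$. Your specialization to squared multinomial coefficients at $\nu = 0$ and the valuation argument via $p \nmid (p-1)!$ are exactly what the paper's general proof reduces to in this case.
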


We extend this in Section \ref{sec:mod} to get
\begin{theorem}
\label{thm:mod}
Let 
\begin{itemize}
\item $p = k$ be prime with $2 \nu < p$, or 
\item $p = k + \nu$ be prime with $\nu < p$.
\end{itemize}
Then \begin{equation*}
        W_n(\nu;2k) \equiv n \mod p.
     \end{equation*}
If $p^2 = k$ with $p$ prime then
     \begin{equation*}
        W_n(0;2k) \equiv n \mod p^2.
     \end{equation*}
\end{theorem}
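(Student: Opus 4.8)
The plan is to prove the congruences by working with the explicit formula for $W_n(\nu;2k)$ from Theorem~1.2, namely
\[
W_{n}(\nu;2k) = \frac{(k+\nu)! \nu!^{n-1}}{(k+n \nu)!} \sum_{k_1 + \cdots + k_n = k} \binom{k}{k_1,\dots,k_n}\binom{k+n\nu}{k_1+\nu,\dots,k_n+\nu},
\]
and to isolate, in each of the three cases, exactly which compositions $(k_1,\dots,k_n)$ survive reduction modulo the prime in question. The governing heuristic is that a prime $p$ appearing in the denominator factorials forces the multinomial coefficients to vanish modulo $p$ unless the composition is ``degenerate'' in a suitable sense — typically unless all the mass is concentrated in a single coordinate. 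First I would clarify the prefactor: I expect that in each case the rational prefactor $\frac{(k+\nu)!\,\nu!^{n-1}}{(k+n\nu)!}$ is a $p$-adic unit times a harmless integer, so that the congruence reduces to a statement purely about the integer sum.

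For the first case, $p=k$ prime with $2\nu<p$, the key step is a Lucas-type or Kummer-type analysis of the multinomial coefficients modulo $p$. When $k=p$, I would argue that $\binom{k}{k_1,\dots,k_n}=\binom{p}{k_1,\dots,k_n}$ is divisible by $p$ unless some $k_i=p$ and the rest are zero, because $p$ divides $p!$ but divides no $k_i!$ with $0<k_i<p$. There are exactly $n$ such degenerate compositions (one for each choice of which coordinate equals $p$), and on each of these the second multinomial $\binom{k+n\nu}{\dots}$ reduces to a product of binomials whose $p$-adic valuation I would track via Kummer's theorem; the hypothesis $2\nu<p$ is what guarantees the surviving terms each contribute $1$ modulo $p$ after combining with the prefactor, giving the sum $\equiv n$. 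The second case, $p=k+\nu$ prime with $\nu<p$, is structurally parallel but now the degeneracy is driven by the \emph{second} multinomial $\binom{k+n\nu}{k_1+\nu,\dots,k_n+\nu}$: since $\sum(k_i+\nu)=k+n\nu$ and $p=k+\nu$ sits among these, I would show the coefficient is divisible by $p$ unless one shifted part $k_i+\nu$ absorbs a full factor of $p$, again leaving $n$ surviving terms.

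For the prime-power case $p^2=k$ with $\nu=0$, the formula simplifies dramatically to
\[
W_n(0;2k)=\sum_{k_1+\cdots+k_n=k}\binom{k}{k_1,\dots,k_n}^2,
\]
a sum of squared multinomials, and the target modulus is $p^2$. Here I would refine the Case~1 reasoning: squaring means that any term with a single factor of $p$ in its multinomial contributes a factor of $p^2$ and dies modulo $p^2$, so it suffices to find the compositions whose squared multinomial is a unit or a single power of $p^2$. With $k=p^2$, I would use the expansion of $\binom{p^2}{k_1,\dots,k_n}$ modulo $p^2$ — via Kummer's theorem on carries in base $p$, the valuation equals the number of carries when adding the $k_i$ — to show that only the $n$ fully-degenerate compositions (one coordinate equal to $p^2$) survive with value $1$, while every other composition incurs at least one carry and hence at least a factor $p^2$ after squaring. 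This yields $W_n(0;2k)\equiv n \pmod{p^2}$.

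I expect the main obstacle to be the bookkeeping in the prime-power case: controlling the squared multinomials modulo $p^2$ requires ruling out terms that individually have $p$-adic valuation exactly $1$ (which survive squaring only up to $p^2$) with enough precision to see they cancel or sum to a multiple of $p^2$, and Kummer's carry-counting must be applied carefully since a single carry in base $p$ produces valuation $1$, not $2$. Establishing that the configurations with exactly one carry either do not occur for $k=p^2$ or cancel in aggregate is the delicate point; the two linear cases, by contrast, should follow cleanly once the correct multinomial vanishing lemma is isolated.
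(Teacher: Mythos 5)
Your overall strategy---isolate the $n$ ``degenerate'' compositions (some $k_i = k$, the rest zero), show each contributes exactly $1$, and kill every other term by a $p$-adic valuation count---is the same as the paper's. But your execution has a genuine gap in the two prime cases, and it stems from your very first step: the expectation that the prefactor $\frac{(k+\nu)!\,\nu!^{n-1}}{(k+n\nu)!}$ is a $p$-adic unit times a harmless integer is \emph{false} whenever $\nu \geq 1$ and $n\nu \geq p$. Indeed $v_p\bigl((k+n\nu)!\bigr) \geq 1 + \lfloor n\nu/p \rfloor$ grows with $n$, while the numerator $(k+\nu)!\,\nu!^{n-1}$ contributes valuation at most $1$, so the prefactor has arbitrarily negative $p$-adic valuation for large $n$. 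Consequently the congruence does \emph{not} reduce to a statement about the integer sum $\sum \binom{k}{k_1,\dots,k_n}\binom{k+n\nu}{k_1+\nu,\dots,k_n+\nu}$, and your vanishing argument for non-degenerate terms in case 1---$p$ divides $\binom{p}{k_1,\dots,k_n}$, hence the term dies---is insufficient: that single factor of $p$ can be eaten by the prefactor's denominator. The same objection applies to your case 2, where the divisibility of the second multinomial by $p$ is likewise not enough on its own.

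The repair is precisely the paper's first move: cancel $(k+n\nu)!$ against the second multinomial and write each term as $\frac{k!\,(k+\nu)!\,\nu!^{n-1}}{k_1!\cdots k_n!\,(k_1+\nu)!\cdots(k_n+\nu)!}$, so that all $p$-adic bookkeeping is local to the term. The key claim---which your sketch never isolates for case 1---is that at most one index $i$ can have $k_i+\nu \geq p$; this is exactly where the hypotheses $2\nu < p$ (resp.\ $\nu < p$) enter. (You instead attribute the role of $2\nu<p$ to the surviving terms, but those equal $1$ identically, with no hypothesis and no Kummer computation needed.) Granting that claim, a non-degenerate term has numerator valuation $2$ and denominator valuation at most $1$ when $p=k$, and numerator valuation $1$ with denominator valuation $0$ when $p=k+\nu$, so it dies mod $p$. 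Your third case, by contrast, is fine as sketched: when $\nu = 0$ the prefactor is literally $1$, $W_n(0;2k)=\sum\binom{k}{k_1,\dots,k_n}^2$, and any non-degenerate multinomial coefficient with top $p^2$ has valuation at least $1$, so its square is divisible by $p^2$. Your closing worry about compositions of valuation exactly $1$ is unfounded: squaring gives valuation exactly $2$, which is $\equiv 0 \bmod p^2$, and nothing needs to cancel in aggregate.
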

It is worth remarking that if both $p_1 := k$ and $p_2 := k+\nu$ are prime
    with $2 \nu < p_1$ (and hence $\nu < 2 \nu < p_1 < p_2$), then 
    clearly $W_n(\nu; 2k) \equiv n \mod p_1 p_2$ by the 
    Chinese Remainder Theorem.

In Section \ref{sec:conc} we discuss some of the open problems related 
    to this research.

\section{A proof of Theorem \ref{thm:main}: $r_\nu | (2 \nu -1)!/\nu!$}
\label{sec:A Z}
To prove Theorem \ref{thm:main}, we make use of the following remark and lemma:

\begin{remark}
There are multiple equivalent ways of representing $A_{k,j}(\nu)$.
The three most common that we will use are:
    \begin{align*}
        A_{k,j}(\nu)
            &= \binom{k}{j} \frac{(k+\nu)! \nu!}{(k-j+\nu)! (j+\nu)!} & \\
            &= \binom{k}{j} \binom{k+\nu}{j}\binom{j+\nu}{j}^{-1} \\
            &= \binom{k+\nu}{j} \binom{k+\nu}{j+\nu}\binom{k+\nu}{\nu}^{-1}
    \end{align*}
\end{remark}

\begin{lemma}
    \label{lem:gcd}
    For integers $1 \leq \nu \leq j$ we have
    \begin{equation*}
        \gcd((j-\nu+1)(j-\nu+2) \cdots j,\, (j+1)(j+2) \cdots (j+\nu)) \divides (2\nu-1)!
    \end{equation*}
\end{lemma}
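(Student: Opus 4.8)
The plan is to prove the divisibility one prime at a time. Writing $P = (j-\nu+1)\cdots j$ and $Q = (j+1)\cdots(j+\nu)$ and letting $v_p$ denote the $p$-adic valuation, it suffices to show that $\min(v_p(P),v_p(Q)) \le v_p((2\nu-1)!)$ for every prime $p$, since $v_p(\gcd(P,Q)) = \min(v_p(P),v_p(Q))$. For intuition, note that the two blocks $\{j-\nu+1,\dots,j\}$ and $\{j+1,\dots,j+\nu\}$ together span $2\nu$ consecutive integers, so a multiple of $p$ drawn from each differs by at most $2\nu-1$; in particular no prime $p > 2\nu-1$ can divide both $P$ and $Q$, matching $v_p((2\nu-1)!)=0$. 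The work is therefore entirely with the small primes $p \le 2\nu-1$, where both blocks may share multiples of $p$ and of its powers. A tempting shortcut is to bound $\gcd(P,Q)$ by the resultant of the polynomials $\prod_{i=0}^{\nu-1}(x-i)$ and $\prod_{i=1}^{\nu}(x+i)$, but that resultant is far larger than $(2\nu-1)!$, so it does not suffice.

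The key move I would make is to forget the special split $(\nu,\nu)$ and prove a more flexible statement amenable to induction on powers of $p$: for any run of $n$ consecutive positive integers cut into a left part of size $s$ and a right part of size $n-s$, the minimum of the two $p$-adic valuations is at most $v_p((n-1)!)$. The Lemma is then the case $n = 2\nu$, $s = \nu$. To run the induction I would peel off one power of $p$: the multiples of $p$ inside the run, once divided by $p$, again form a run of $n_1$ consecutive integers, inheriting a split at $a_1$ (the number of multiples of $p$ in the left part), with $b_1 = n_1 - a_1$ on the right. Legendre's recursion $v_p((n-1)!) = \lfloor (n-1)/p\rfloor + v_p\big(\lfloor (n-1)/p\rfloor !\big)$ together with the identity $n_1 - 1 \le \lceil n/p\rceil - 1 = \lfloor (n-1)/p\rfloor$ are exactly what is needed to line up the inductive bound $v_p((n_1-1)!)$ with the target.

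Carrying out the step, one has $v_p(\text{left}) = a_1 + v_p(\text{left}_1)$ and $v_p(\text{right}) = b_1 + v_p(\text{right}_1)$, where the inner products come from the split of the $n_1$-run at $a_1$. Applying the induction hypothesis to the inner split bounds the smaller of $v_p(\text{left}_1),v_p(\text{right}_1)$ by $v_p((n_1-1)!)$, and since $v_p((n-1)!) - v_p((n_1-1)!) \ge \lfloor (n-1)/p\rfloor$, everything closes provided the relevant count $a_1$ or $b_1$ is at most $\lfloor (n-1)/p\rfloor$. I expect the one genuinely delicate point to be the boundary case: because $a_1 + b_1 = n_1 \le \lfloor (n-1)/p\rfloor + 1$, the only way a count can exceed $\lfloor (n-1)/p\rfloor$ is for it to equal $n_1$ while the other count is $0$, that is, for one side to contain every multiple of $p$ and the other none. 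But then that other side contributes valuation $0$, so the minimum is $0$ and the bound holds trivially. Confirming that this dichotomy is exhaustive, and that the induction hypothesis is invoked on the side whose inner valuation is actually the smaller of the two, is where I would be most careful.
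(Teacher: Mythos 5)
Your proposal is correct, but it takes a genuinely different route from the paper's proof. The paper also argues one prime at a time, but by a direct count: after a translation/Chinese-Remainder reduction that lets it assume the minimum valuation is attained on the block $(j+1)\cdots(j+\nu)$, it bounds the number of terms of that block divisible by each prime power $p^\beta \le 2\nu$ (at most $\left\lceil \nu/p^\beta\right\rceil$ when $p^\beta\le\nu$, at most $\left\lceil 2\nu/p^\beta\right\rceil-1$ when $\nu<p^\beta\le 2\nu$), sums these counts over $\beta$ to get the bound \eqref{eq:star}, and compares that sum term by term with Legendre's formula for $v_p((2\nu-1)!)$. You instead strengthen the statement to arbitrary splits of a run of $n$ consecutive integers and induct by dividing out a single factor of $p$, using the recursion $v_p((n-1)!)=\lfloor (n-1)/p\rfloor + v_p\left(\lfloor (n-1)/p\rfloor !\right)$; the strengthening is exactly what makes the induction close, since the symmetric split $(\nu,\nu)$ does not survive the peeling step. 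Your key estimates check out: the inner run has length $n_1\le\lceil n/p\rceil=\lfloor(n-1)/p\rfloor+1$, so either the multiple count on the side carrying the inner minimum is at most $\lfloor(n-1)/p\rfloor$, in which case the inductive bound chains through, or that count equals $n_1$, forcing the other side to contain no multiples of $p$ at all and hence to have valuation $0$. What your approach buys: it avoids the paper's translation and without-loss-of-generality arguments (the least rigorous part of their proof), and it proves a strictly more general fact (for any cut of $n$ consecutive positive integers, the gcd of the two partial products divides $(n-1)!$). What the paper's approach buys: its explicit per-prime-power count \eqref{eq:star} is sharper than $(2\nu-1)!$ for fixed $\nu\ge 4$, and the paper reuses that refinement later (e.g., the bound $7!/2$ in the proof of Theorem \ref{thm:nu=4}); your induction yields only the $(n-1)!$ bound and would not recover those sharper constants.
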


\begin{proof}
Let $A_{j,\nu} = j-\nu+1, \dots, j$ and $B_{j, \nu} = j+1, \dots, j+\nu$.
Let $\pi(A_{j,\nu})$ and $\pi(B_{j,\nu})$ be the products of these sequences.
Let $p$ be a prime number and $v_p(x)$ be the $p$-adic valuation of $x$.
We see that for $p^\alpha > 2 \nu$ that there is at most one term in $A_{j,\nu}\cup B_{j,\nu}$
    that is divisible by $p^\alpha$.
Without loss of generality we may assume that such a term, if it exists, is 
    in $A_{j, \nu}$.
We see that $v_p(B_{j,\nu}) = v_p(B_{j+p^\alpha k, \nu})$ for all $k$ by 
    translation.
Further, if there exists a term in $A_{j,\nu}$ that is divisible by $p^\alpha$,
    then, by translations we can assume that this term is divisible by an 
    arbitrarily high power of $p$.
Hence we can assume that, if such a term exists, then we can find a translate
    of this sequence so that
    \[ v_p(\gcd(\pi(A_{j+p^\alpha k, \nu}), \pi(B_{j+p^\alpha k, \nu}))) = 
       v_p(\pi(B_{j+p^\alpha k, \nu})). \]
We see that if $p^\beta \leq \nu$ then there are at most 
    $\left\lceil \frac{\nu}{p^\beta} \right\rceil$ terms in $B_{j+p^\alpha k, \nu}$
    are are divisible by $p^\beta$.
We see that if $\nu < p^\beta \leq 2\nu$ then there are at most 
    $\left\lceil \frac{2\nu}{p^\beta} \right\rceil-1$ terms in 
    $B_{j+p^\alpha k, \nu}$ are are divisible by $p^\beta$.
By Chinese remainder theorem we can find such a $j$ so that both the 
    inequalities are exact.
This gives us that 
\begin{equation}
    v_p(\gcd(\pi(A_{j+p^\alpha k, \nu}), \pi(B_{j+p^\alpha k, \nu}) ))
    \leq \sum_{p^\beta \leq \nu} \left\lceil \frac{\nu}{p^\beta} \right\rceil + 
    \sum_{\nu < p^\beta \leq 2\nu} \left\lceil \frac{2\nu}{p^\beta} \right\rceil -1 
\label{eq:star}
\end{equation}
and moreover there exists a $j$ so that this is exact.

We observe that 
    \[ v_p((2 \nu - 1)!) = \sum_{p^\beta \leq 2 \nu -1} 
    \left\lfloor \frac{2 \nu -1}{p^\beta} \right\rfloor. \]

Observe that if $p^\beta < \nu$ then 
    \[ \left\lfloor \frac{2 \nu -1}{p^\beta} \right\rfloor \geq 
    \left\lceil \frac{\nu}{p^\beta} \right\rceil. \] 
If $p^\beta = \nu$ then 
    \[ \left\lfloor \frac{2 \nu -1}{p^\beta} \right\rfloor =
    \left\lceil \frac{\nu}{p^\beta} \right\rceil  = 1. \] 
If $\nu < p^\beta \leq 2 \nu - 1$ then 
    \[ \left\lfloor \frac{2 \nu -1}{p^\beta} \right\rfloor = 1 \geq
    \left\lceil \frac{2\nu}{p^\beta} \right\rceil -1.  \]
Lastly if $p^\beta = 2 \nu$ then 
    \[ \left\lfloor \frac{2 \nu -1}{p^\beta} \right\rfloor = 0 \geq
    \left\lceil \frac{2\nu}{p^\beta} \right\rceil -1.  \]

Hence $v_p(\gcd(\pi(A_{j,\nu}), \pi(B_{j,\nu})) \leq v_p((2 \nu -1)!)$ which gives that
    \[ \gcd(\pi(A_{j,\nu}), \pi(B_{j,\nu})) \divides (2\nu-1)! \]
as required.
\end{proof}

It is worth remarking that for any fixed $\nu \geq 4$, we can find 
    tighter lower bounds for the $\gcd$ by using \eqref{eq:star} directly.
This can be used to tighten the results of Theorem \ref{thm:main} for 
    specific $\nu$.
Unfortunately even when tightened in this way, we cannot achieve the 
    conjectured bound.
See Table \ref{tab:compare}

\begin{table}
\begin{tabular}{l|l|l}
$\nu$ & Equation \eqref{eq:star} & $(2 \nu -1)!$ \\ \hline
1 &$ 1 $&$ 1$\\ 
2 &$ 2\cdot 3 $&$ 2\cdot 3$\\ 
3 &$ 2^3\cdot 3\cdot 5 $&$ 2^3\cdot 3\cdot 5$\\ 
4 &$ 2^3\cdot 3^2\cdot 5\cdot 7 $&$ 2^4\cdot 3^2\cdot 5\cdot 7$\\ 
5 &$ 2^6\cdot 3^3\cdot 5\cdot 7 $&$ 2^7\cdot 3^4\cdot 5\cdot 7$\\ 
6 &$ 2^6\cdot 3^3\cdot 5^2\cdot 7\cdot 11 $&$ 2^8\cdot 3^4\cdot 5^2\cdot 7\cdot 11$\\ 
7 &$ 2^7\cdot 3^4\cdot 5^2\cdot 7\cdot 11\cdot 13 $&$ 2^{10}\cdot 3^5\cdot 5^2\cdot 7\cdot 11\cdot 13$\\ 
8 &$ 2^7\cdot 3^4\cdot 5^2\cdot 7^2\cdot 11\cdot 13 $&$ 2^{11}\cdot 3^6\cdot 5^3\cdot 7^2\cdot 11\cdot 13$\\ 
9 &$ 2^{11}\cdot 3^4\cdot 5^2\cdot 7^2\cdot 11\cdot 13\cdot 17 $&$ 2^{15}\cdot 3^6\cdot 5^3\cdot 7^2\cdot 11\cdot 13\cdot 17$\\ 
10 &$ 2^{11}\cdot 3^6\cdot 5^2\cdot 7^2\cdot 11\cdot 13\cdot 17\cdot 19 $&$ 2^{16}\cdot 3^8\cdot 5^3\cdot 7^2\cdot 11\cdot 13\cdot 17\cdot 19$\\ 
\end{tabular}
\caption{Prime factorization of Eq \eqref{eq:star} and $(2\nu -1)!$.}
\label{tab:compare}
\end{table}

We are now ready to prove Theorem \ref{thm:main}.

\begin{proof}[Proof of Theorem \ref{thm:main}]
    We fix integers $\nu \geq 0$ and $0 \leq j \leq k$. We consider 2 cases:

    If $0 \leq j \leq \nu-1$ then we have
    \begin{equation*}
        A_{k,j}(\nu) = \binom{k}{j} \binom{k+\nu}{j} \binom{j+\nu}{j}^{-1}
                     = \binom{k}{j}\binom{k+\nu}{j} j! \frac{\nu!}{(j+\nu)!}
    \end{equation*}
    by our assumption on $j$ we know $j+\nu \leq 2\nu-1$, hence $(j+\nu)! \divides (2\nu-1)!$, and therefore
    \begin{equation*}
        A_{k,j}(\nu) \in \tfrac{\nu!}{(j+\nu)!} \Z \subseteq \tfrac{\nu!}{(2\nu-1)!} \Z.
    \end{equation*}

    Otherwise we may assume that $j \geq \nu$.  Then we have
    \begin{align*}
        A_{k,j}(\nu)
            &= \binom{k+\nu}{j}\binom{k+\nu}{j+\nu} \binom{k+\nu}{\nu}^{-1} & \\
            &= \frac{(k+\nu) \cdots (k+1) \cdot k \cdots (k+\nu-j+1)}{j!} \cdot \\
            &  \frac{(k+\nu) \cdots (k-j+1)}{(j+\nu)!} \cdot \frac{\nu!}{(k+\nu) \cdots (k+1)} & \\
            &= \frac{k \cdots (k+\nu-j+1)}{j!} \cdot \frac{(k+\nu) \cdots (k-j+1)}{(j+\nu)!} \cdot \nu! & \\
            &= \frac{(k+\nu) \cdots (k+1)}{(j+\nu) \cdots (j+1) \cdot j \cdots (j-\nu+1)} \binom{k}{j-\nu} \binom{k}{j} \nu!
    \end{align*}

    Next observe that
    \begin{align*}
        \frac{(k+\nu) \cdots (k+1)}{(j+\nu) \cdots (j+1)} \binom{k}{j-\nu} \binom{k}{j} &= \binom{k}{j-\nu} \binom{k+\nu}{j+\nu} & \\
        \frac{(k+\nu) \cdots (k+1)}{(j) \cdots (j-\nu+1)} \binom{k}{j-\nu} \binom{k}{j} &= \binom{k+\nu}{j} \binom{k}{j} &
    \end{align*}
    are both integers, hence there exists $p,q \in \Z$ such that
    \begin{equation*}
        A_{k,j}(\nu)
            = \frac{(k+\nu) \cdots (k+1)}{(j+\nu) \cdots (j+1) \cdot j \cdots (j-\nu+1)} \binom{k}{j-\nu} \binom{k}{j} \nu!
            = \frac{p}{q} \nu!
    \end{equation*}
    and where $q \divides \gcd((j+\nu) \cdots (j+1),\,j \cdots (j-\nu+1))$.

    By Lemma \ref{lem:gcd} and transitivity of divisibility, $q \divides (2\nu-1)!$ hence there exists $p'$ such that
    \begin{equation*}
        A_{k,j}(\nu) = p' \cdot \frac{\nu!}{(2\nu-1)!}.
    \end{equation*}
    Thus, for all integers $\nu \geq 0$ we have $r_\nu \divides \tfrac{\nu!}{(2\nu-1)!}$ as desired.
\end{proof}

\section{A proof of Theorem \ref{thm:backwards}: $\binom{2\nu-1}{\nu} \divides r_\nu$}
\label{sec:backwards}

Theorem \ref{thm:backwards} is an immediate corollary of:
\begin{lemma}
Let $p^\alpha | \binom{2 \nu -1 }{\nu}$.
Let $p^r \geq p^\alpha$ and $p^r > \nu$.
Then the denominator of 
$A_{p^{r}-1, \nu-1}(\nu)$ is divisible by $p^\alpha$.
\end{lemma}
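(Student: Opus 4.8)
The plan is to compute the $p$-adic valuation of $A_{p^r-1,\nu-1}(\nu)$ directly and to show it is at most $-\alpha$, since a rational number with $v_p(\cdot)\le-\alpha$ has (reduced) denominator divisible by $p^\alpha$. I would start from the second representation in the Remark, namely $A_{k,j}(\nu)=\binom{k}{j}\binom{k+\nu}{j}\binom{j+\nu}{j}^{-1}$. Substituting $k=p^r-1$ and $j=\nu-1$, so that $k+\nu=p^r+\nu-1$ and $j+\nu=2\nu-1$, and using $\binom{2\nu-1}{\nu-1}=\binom{2\nu-1}{\nu}$, this collapses to
\[
A_{p^r-1,\nu-1}(\nu)=\binom{p^r-1}{\nu-1}\binom{p^r+\nu-1}{\nu-1}\binom{2\nu-1}{\nu}^{-1}.
\]
Since $p^\alpha\divides\binom{2\nu-1}{\nu}$ by hypothesis, we have $v_p\!\left(\binom{2\nu-1}{\nu}\right)\ge\alpha$, so the entire task reduces to showing that the two numerator binomials are coprime to $p$.

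The key observation — and the point where the hypothesis $p^r>\nu$ is used — is that for any integer $t$ with $0<t<p^r$ one has $v_p(p^r\pm t)=v_p(t)$, because $v_p(t)\le r-1<r=v_p(p^r)$ forces the smaller valuation to survive the sum or difference. Writing each binomial as a product of its $\nu-1$ consecutive factors,
\[
\binom{p^r-1}{\nu-1}=\prod_{t=1}^{\nu-1}\frac{p^r-t}{t},
\qquad
\binom{p^r+\nu-1}{\nu-1}=\prod_{t=1}^{\nu-1}\frac{p^r+t}{t},
\]
and noting that $1\le t\le\nu-1<p^r$ in every factor, the observation gives $v_p(p^r-t)=v_p(p^r+t)=v_p(t)$. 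Hence every factor has valuation $0$, and therefore $v_p\!\left(\binom{p^r-1}{\nu-1}\right)=v_p\!\left(\binom{p^r+\nu-1}{\nu-1}\right)=0$.

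Combining these, $v_p\!\left(A_{p^r-1,\nu-1}(\nu)\right)=-\,v_p\!\left(\binom{2\nu-1}{\nu}\right)\le-\alpha$, which is exactly the assertion that $p^\alpha$ divides the denominator. Theorem \ref{thm:backwards} then follows by applying the lemma to each prime power $p^\alpha$ exactly dividing $\binom{2\nu-1}{\nu}$: for each such prime power we may choose $r$ large enough that $p^r\ge p^\alpha$ and $p^r>\nu$, producing an entry of $A(\nu)$ whose denominator is divisible by $p^\alpha$, so that $p^\alpha\divides r_\nu$; multiplying over all primes yields $\binom{2\nu-1}{\nu}\divides r_\nu$. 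I expect the only delicate point to be the elementary valuation identity $v_p(p^r\pm t)=v_p(t)$ together with the bookkeeping that $p^r>\nu$ guarantees $t<p^r$ throughout the products; once the binomial-product form above is in hand, everything else is routine.
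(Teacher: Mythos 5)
Your proposal is correct and takes essentially the same approach as the paper: both reduce to the identity $A_{p^r-1,\nu-1}(\nu)=\binom{p^r-1}{\nu-1}\binom{p^r+\nu-1}{\nu-1}\binom{2\nu-1}{\nu-1}^{-1}$ and then show the two numerator binomials are coprime to $p$ by matching each factor $p^r\pm t$ with $t$, using $p^r>\nu$. The only cosmetic difference is that the paper phrases this as congruences ($\equiv 1$ and $\equiv(-1)^\nu \bmod p$) while you phrase it as the valuation identity $v_p(p^r\pm t)=v_p(t)$, which is arguably the cleaner formulation of the same idea.
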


\begin{proof}
Let $p^\alpha | \binom{2 \nu -1 }{\nu}$.
Let $p^r \geq p^\alpha$ and $p^r > \nu$.
Notice that 
\[
A_{p^{r}-1, \nu-1}(\nu)
= \binom{p^r+\nu-1}{\nu-1} \binom{p^r-1}{\nu-1} \binom{2 \nu -1}{\nu-1}^{-1}.
\]
Consider the first term.
\[
\binom{p^r + \nu -1}{\nu-1} = \frac{(p^r+\nu-1) \cdots (p^r+1)}{(\nu-1) \cdots 1}.
\]
Observe that each factor of the top is equivalent mod $p^r$ to the matching
    factor in the bottom.
Hence $\binom{p^r + \nu -1}{\nu-1} \equiv 1 \mod p$.

The second term is similar, with each term on the top equivalent mod $p^r$ to
   the additive inverse of the associated factor on the bottom.
Hence $\binom{p^r -1}{\nu-1} \equiv (-1)^\nu \mod p$.

Hence 
\[
A_{p^{r}-1, \nu-1}(\nu) = \frac{1}{p^\alpha} \cdot \frac{a}{b}
\]
with $p$ co-prime to $a$.
\end{proof}

\section{The case $\nu = 3$ and $\nu = 4$}
\label{sec:small}
We see that $r_1 = 1 = \binom{1}{1}$ and $r_2 = 3 = \binom{3}{2}$.
In this section we show the next two cases of Conjecture \ref{conj:main} 
    hold, namely that 
            $r_3 = 10 = \binom{5}{3}$ and $r_4 = 35 = \binom{7}{4}$.

We first need the Lemma
\begin{lemma}
    \label{lem:kummer}
    Let $n$ and $k$ be non-negative integers. If $n$ is even and $k$ is odd then $\binom{n}{k}$ is even.
\end{lemma}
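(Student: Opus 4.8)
The plan is to deduce this directly from Kummer's theorem, which controls the $2$-adic valuation of a binomial coefficient via carries in base $2$. Recall that $v_p\!\left(\binom{a+b}{a}\right)$ equals the number of carries incurred when $a$ and $b$ are added in base $p$. We may assume $0 \le k \le n$, since otherwise $\binom{n}{k} = 0$ is even. Write $n = k + (n-k)$; because $n$ is even and $k$ is odd, the complement $n-k$ is also odd. Thus both $k$ and $n-k$ have least significant binary digit equal to $1$, so adding them in base $2$ produces a carry out of the units position. Kummer's theorem then gives $v_2\!\left(\binom{n}{k}\right) \ge 1$, i.e. $\binom{n}{k}$ is even.

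Alternatively, one can avoid invoking Kummer's theorem entirely and argue by an elementary parity count using the absorption identity
\[
 k \binom{n}{k} = n \binom{n-1}{k-1}.
\]
Since $n$ is even, the right-hand side is even, hence so is $k\binom{n}{k}$; as $k$ is odd, it follows that $\binom{n}{k}$ must itself be even. I would likely present this second argument, as it is self-contained and needs no external machinery, while still noting the Kummer viewpoint, since that is the formulation that generalizes to the exact prime-power divisibility statements appearing elsewhere in the paper.

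There is essentially no serious obstacle here: the only points that deserve a word of care are the degenerate range $k > n$ (handled by the convention $\binom{n}{k}=0$) and the observation that $n-k$ inherits odd parity from the hypotheses, which is immediate. The main reason to phrase the lemma with the label suggesting Kummer is forward compatibility: the subsequent case analysis for $\nu = 3$ and $\nu = 4$ will need to track exact $2$-adic valuations of binomial coefficients, and the carry-counting formulation is the tool that makes those computations uniform.
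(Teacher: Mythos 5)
Your first argument is exactly the paper's proof: Kummer's theorem, the observation that $n$ even and $k$ odd force $n-k$ odd, so both addends have least significant bit $1$ and the addition $k + (n-k)$ carries in the units place, giving $2 \divides \binom{n}{k}$. Your alternative via the absorption identity $k\binom{n}{k} = n\binom{n-1}{k-1}$ is a genuinely different and correct route the paper does not take: since $n$ is even the right side is even, and since $k$ is odd this forces $\binom{n}{k}$ to be even. It is more elementary (no appeal to Kummer) and also covers the degenerate case $k > n$ without comment, since both sides vanish there. The one trade-off is the one you already identified: the paper cites Kummer's theorem because the carry-counting formulation is the natural language for the $2$-adic and $3$-adic bookkeeping in the $\nu = 3$ and $\nu = 4$ arguments that follow, so the paper's choice keeps the toolset uniform, while yours keeps this lemma self-contained. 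Either proof is acceptable here.
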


\begin{proof}
    By Kummer's theorem \cite{Kum1852}, 2 divides $\binom{n}{k}$ when there is at least one carry when $k$ and $n-k$ are added in base 2.
    Since $n$ is even and $k$ is odd, $n-k$ is odd.
    The least significant bit of an odd integer represented in base 2 is always 1. Hence both $k$ and $n-k$ have a 1 in the least significant place. Thus when they are added, this will result in a carry. So 2 divides $\binom{n}{k}$.
\end{proof}

We now follow the proof of Theorem \ref{thm:main} using $\nu = 3$ to show:
\begin{theorem}
\label{thm:nu=3}
    Conjecture \ref{conj:main} holds for $\nu = 3$.
    That is $r_{3} = \binom{5}{3} = 10$.
\end{theorem}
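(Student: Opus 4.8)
The plan is to combine Theorems~\ref{thm:main} and~\ref{thm:backwards} to pin down $r_3$ up to a single prime, and then settle that prime by refining the $2$-adic bookkeeping from the proof of Theorem~\ref{thm:main}. For $\nu = 3$ Theorem~\ref{thm:main} gives $r_3 \divides \tfrac{5!}{3!} = 20 = 2^2 \cdot 5$, while Theorem~\ref{thm:backwards} gives $10 = 2\cdot 5 = \binom{5}{3} \divides r_3$. Hence $r_3 \in \{10,20\}$, the $5$-part of $r_3$ is already exactly $5$, and it remains only to show that the power of $2$ dividing $r_3$ is at most $1$. Writing $v_2$ for the $2$-adic valuation, it therefore suffices to prove $v_2(A_{k,j}(3)) \geq -1$ for every $0 \le j \le k$.

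The cases $j \in \{0,1,2\}$ (that is, $j \le \nu-1$) I would dispatch directly: from $A_{k,0}(3)=1$, $A_{k,1}(3)=k(k+3)/4$, and $A_{k,2}(3)=k(k-1)(k+2)(k+3)/40$, a short parity count shows each numerator carries enough factors of $2$ to leave $v_2 \geq -1$. For the main range $j \geq 3$ I would follow the computation in the proof of Theorem~\ref{thm:main}, writing
\[
A_{k,j}(3) = \frac{6\,(k+1)(k+2)(k+3)\,\binom{k}{j-3}\binom{k}{j}}{\bigl(j(j-1)(j-2)\bigr)\bigl((j+1)(j+2)(j+3)\bigr)},
\]
abbreviating $\pi_A = j(j-1)(j-2)$, $\pi_B = (j+1)(j+2)(j+3)$, and letting $N$ denote the numerator without the factor $6$. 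The identities already recorded in that proof say precisely that $X := N/\pi_B = \binom{k}{j-3}\binom{k+3}{j+3}$ and $Y := N/\pi_A = \binom{k+3}{j}\binom{k}{j}$ are integers, so $v_2(N) \geq \max(v_2(\pi_A), v_2(\pi_B))$. Since $v_2(A_{k,j}(3)) = 1 + v_2(N) - v_2(\pi_A) - v_2(\pi_B)$, the bound $v_2(A_{k,j}(3)) \geq -1$ reduces to $v_2(N) \geq v_2(\pi_A) + v_2(\pi_B) - 2$.

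Setting $a = v_2(\pi_A)$, $b = v_2(\pi_B)$, and recalling from Lemma~\ref{lem:gcd} that $\min(a,b) = v_2(\gcd(\pi_A,\pi_B)) \le v_2(5!) = 3$, the inequality is automatic whenever $\min(a,b) \le 2$, because then $v_2(N) \ge \max(a,b) = a+b-\min(a,b) \ge a+b-2$. The only remaining situation is $\min(a,b) = 3$. Here I would run the elementary analysis of $v_2$ on the three consecutive even numbers inside $\pi_A$ or $\pi_B$ to show that this extreme value forces $j \equiv 6 \pmod 8$ (when $j$ is even, giving $a = 3 \le b$) or $j \equiv 1 \pmod 8$ (when $j$ is odd, giving $b = 3 \le a$); in the first case the required inequality becomes $v_2(X) \geq 1$ and in the second $v_2(Y) \geq 1$.

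Finally, Lemma~\ref{lem:kummer} closes both cases. When $j \equiv 6 \pmod 8$ both $j-3$ and $j+3$ are odd, and since exactly one of $k$, $k+3$ is even, one of the two binomials making up $X$ has an even top and odd bottom, hence is even; when $j \equiv 1 \pmod 8$ the odd index $j$ likewise makes one of the binomials in $Y$ even. In either case the compensating factor of $2$ supplies the missing unit of valuation. The main obstacle is thus the $2$-adic bookkeeping in the $\min(a,b)=3$ case, namely correctly identifying the residues of $j$ modulo $8$ that attain the extreme gcd valuation and matching each against the appropriate even binomial, rather than any deeper structural difficulty. Assembling these pieces yields $v_2(A_{k,j}(3)) \geq -1$ for all $j,k$, hence $v_2(r_3) = 1$ and $r_3 = 10$.
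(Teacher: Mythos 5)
Your proposal is correct and follows essentially the same route as the paper: the same split into $0 \le j \le \nu-1$ versus $j \ge \nu$, the same two representations $N/\pi_B = \binom{k}{j-3}\binom{k+3}{j+3}$ and $N/\pi_A = \binom{k+3}{j}\binom{k}{j}$ from the proof of Theorem~\ref{thm:main}, the same identification of $j \equiv 6 \pmod 8$ and $j \equiv 1 \pmod 8$ as the critical residues, and the same appeal to Lemma~\ref{lem:kummer} to extract the extra factor of $2$. The only (minor, and arguably cleaner) difference is that you invoke Theorem~\ref{thm:main} up front to reduce everything to the single $2$-adic inequality $v_2(A_{k,j}(3)) \ge -1$, where the paper instead re-verifies membership in $\frac{1}{10}\Z$ case by case.
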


\begin{proof}
    We have that $10 | r_3$ by Theorem \ref{thm:backwards}.

    As in the proof of Theorem \ref{thm:main}, we first consider the case where $0 \leq j \leq 2$.
    A quick calculation shows that 
    \begin{align*}
        A_{k,0}(3)\binom{5}{3} & = 10 \\ 
        A_{k,1}(3)\binom{5}{3} & = \frac{5 (k+3) k}{2} \\ 
        A_{k,2}(3)\binom{5}{3} & = \frac{(k-1) (k+2) (k+3) k}{4}
    \end{align*}
    By considering the cases of $k$ even or odd, we see that all of these values are always integers, and hence 
    $A_{k,0}(3), A_{k,1}(3), A_{k,2}(3) \in \frac{1}{10}\mathbb{Z}$.

    If $j \geq 3$ then, as in the proof of Theorem \ref{thm:main}, we have
    \begin{align*}
A_{k,j}(3) & = \frac{3!}{(j+3)(j+2)(j+1)}\binom{k+3}{j}\binom{k}{j}\\
           &  = \frac{3!}{j(j-1)(j-2)}\binom{k}{j-3}\binom{k+3}{j+3}.
    \end{align*}
We see that if $8 \nmid \gcd((j+3)(j+2)(j+1),\, j (j-1) (j-2))$ then 
    \[ A_{k,j}(3) \in \frac{2! 3!}{5!} \Z \] as required.
Hence we may assume that $8 \divides \gcd((j+3)(j+2)(j+1),\, j (j-1) (j-2))$.
If $j$ is even then $8 \divides(j+3)(j+2)(j+1)$ implies that 
    $j \equiv 6 \mod 8$.
We observe that $8 \divides j (j-1)(j-2)$ and $16 \nmid j(j-1)(j-2)$.
In this case we observe that one of $\binom{k}{j-3}$ and $\binom{k+3}{j+3}$ is 
    also even by Lemma \ref{lem:kummer}.
Hence we may write \[ A_{k,j}(3) = 
    \frac{2}{8} \cdot \frac{p}{q}\] where $q$ is odd.
This implies that
    \[ A_{k,j}(3) \in \frac{2! 3!}{5!} \Z \] as required.

Similarly if $j$ is odd, then $j \equiv 1 \mod 8$,
    and $8 \divides (j+1)(j+2)(j+3)$ and $16 \nmid (j+1)(j+2)(j+3)$.
Further one of 
    $\binom{k+3}{j}$ and $\binom{k}{j}$ is even, and hence 
\[ A_{k,j}(3) = 
    \frac{2}{8} \cdot \frac{p}{q}\] where $q$ is odd.
Again this implies that
    \[ A_{k,j}(3) \in \frac{2! 3!}{5!} \Z \] as required.
\end{proof}

\begin{theorem}
\label{thm:nu=4}
    Conjecture \ref{conj:main} holds for $\nu = 4$.
    That is $r_{4} = \binom{7}{4} = 35$.
\end{theorem}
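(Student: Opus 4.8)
The plan is to follow the template of Theorem \ref{thm:nu=3}. Theorem \ref{thm:backwards} already gives $35 = \binom{7}{4} \divides r_4$, so the real work is to prove $r_4 \divides 35$, i.e.\ that $A_{k,j}(4) \in \frac{1}{35}\Z$ for all $0 \le j \le k$. Since Theorem \ref{thm:main} gives $r_4 \divides \frac{7!}{4!} = 210 = 2\cdot 3\cdot 5\cdot 7$, every prime $p \ge 5$ (in particular the factors $5$ and $7$ of $35$) is already accounted for; it remains only to improve the estimates at $p = 2$ and $p = 3$, that is, to show $v_2(A_{k,j}(4)) \ge 0$ and $v_3(A_{k,j}(4)) \ge 0$, where $v_p$ denotes the $p$-adic valuation.

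First I would dispose of the range $0 \le j \le 3$ via the explicit formula $A_{k,j}(4) = \binom{k}{j}\binom{k+4}{j}\binom{j+4}{j}^{-1}$. Here $A_{k,0}(4)=1$ and $A_{k,3}(4) = \binom{k}{3}\binom{k+4}{3}/35$, while $A_{k,1}(4) = k(k+4)/5$ and $A_{k,2}(4) = \binom{k}{2}\binom{k+4}{2}/15$; in the last case one checks $3 \divides \binom{k}{2}\binom{k+4}{2}$ by reducing its numerator $k(k-1)(k+3)(k+4)$ modulo $3$ (three consecutive integers $k-1,k,k+1$, one divisible by $3$). Each of these lies in $\frac{1}{35}\Z$.

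For $j \ge 4$ I would use, exactly as in Theorem \ref{thm:main}, the two representations
\[
A_{k,j}(4) = \frac{4!}{(j+1)(j+2)(j+3)(j+4)}\binom{k+4}{j}\binom{k}{j} = \frac{4!}{(j-3)(j-2)(j-1)j}\binom{k}{j-4}\binom{k+4}{j+4},
\]
whose denominators $D_1, D_2$ are each products of four consecutive integers and together span the eight consecutive integers $j-3,\dots,j+4$. For $p = 2$: since $v_2(4!) = 3$ and eight consecutive integers contain exactly one multiple of $8$, exactly one of $D_1, D_2$ contains it and the other has $v_2 = 3$ exactly; evaluating $A_{k,j}(4)$ by that form gives $v_2(A_{k,j}(4)) \ge 3 - 3 = 0$, with no appeal to the binomial factors needed. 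For $p = 3$ we have $v_3(4!) = 1$, and whenever $\min(v_3(D_1),v_3(D_2)) = 1$ the same idea finishes. The only obstructions are $j \equiv 2$ or $6 \pmod 9$, where one denominator contains a multiple of $9$ while the other contains two multiples of $3$ (hence has $v_3 = 2$ exactly); choosing the latter form, it suffices to show its binomial product is divisible by $3$.

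This last point is where I expect the real work, and it requires a base-$3$ analogue of Lemma \ref{lem:kummer}. Writing $4 = 11_3$ and using Kummer's (equivalently Lucas') theorem in base $3$, one shows that if $j \equiv 2 \pmod 9$ then $3 \divides \binom{k+4}{j}\binom{k}{j}$ for every $k$: a surviving $\binom{k}{j}$ forces the units digit of $k$ to be $2$, which produces a carry in $k+4$ making $(k+4)$ fail the digit condition for $\binom{k+4}{j}$. Symmetrically, $j \equiv 6 \pmod 9$ forces $3 \divides \binom{k}{j-4}\binom{k+4}{j+4}$. Choosing the form with $v_3(D) = 2$ then yields $v_3(A_{k,j}(4)) \ge 1 - 2 + 1 = 0$, and crucially this works no matter how large $v_3$ of the other denominator is (even a multiple of $27$ or higher), since that form is never used. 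Combining the $p = 2$ and $p = 3$ bounds with Theorem \ref{thm:main} gives $A_{k,j}(4) \in \frac{1}{35}\Z$ throughout, hence $r_4 \divides 35$, and with Theorem \ref{thm:backwards} we conclude $r_4 = 35$.
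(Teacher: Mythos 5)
Your proposal is correct, and I verified its two key digit arguments in detail. The overall skeleton coincides with the paper's proof: Theorem \ref{thm:backwards} for $35 \mid r_4$, explicit treatment of $0 \le j \le 3$, the two representations of $A_{k,j}(4)$ for $j \ge 4$, a valuation count at $p=2$ and $p=3$, and reduction of the $3$-adic problem to the residues $j \equiv 2, 6 \pmod 9$ (the paper locates exactly the same two obstruction classes and makes the same choice of which representation to use in each). Where you genuinely diverge is the key step of extracting the extra factor of $3$ in those classes. The paper writes $\binom{k+4}{j}\binom{k}{j} = \frac{f_{a,b}(k,j)}{g_{a,b}(k,j)}\binom{k+a}{j}\binom{k+b}{j}$ for shifts $a,b$ chosen separately for each residue of $k$ modulo $3$, and verifies $f \equiv 0$, $g \not\equiv 0 \pmod 3$ case by case in Tables \ref{tab:proof1} and \ref{tab:proof2}; this is elementary but ad hoc, with six cases in all. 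You instead use Lucas'/Kummer's theorem in base $3$: since $4 = (11)_3$, if $\binom{k}{j} \not\equiv 0 \pmod 3$ with $j \equiv 2 \pmod 9$, the units digit of $k$ must be $2$, so $k+4$ has units digit $0 < 2$ and $\binom{k+4}{j} \equiv 0 \pmod 3$; the symmetric computation (using $j-4 \equiv 2$ and $j+4 \equiv 1 \pmod 9$) handles $j \equiv 6 \pmod 9$. This is the natural base-$3$ analogue of the paper's own Lemma \ref{lem:kummer}, which it uses for $\nu = 3$ at the prime $2$, so your route is arguably more in the paper's spirit than its own $\nu = 4$ argument: it removes the case tables entirely and is the variant most likely to extend to larger $\nu$, where the paper's shift-selection would need to be redone by hand. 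Two smaller improvements are also worth noting: your $p = 2$ bound (exactly one multiple of $8$ among the eight consecutive integers $j-3, \dots, j+4$, so the other block of four has $2$-adic valuation exactly $3$) is a self-contained proof of what the paper imports from Table \ref{tab:compare}, and your small-$j$ verification via $A_{k,j}(4) = \binom{k}{j}\binom{k+4}{j}\binom{j+4}{j}^{-1}$ replaces the paper's integrality checks modulo $12$ and $36$ with an almost immediate observation.
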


\begin{proof}
    We have that $35 | r_4$ by Theorem \ref{thm:backwards}.

    As in the proof of the previous theorem, we first consider the case where $0 \leq j \leq 3$.
    A quick calculation shows that 
    \begin{align*}
        A_{k,0}(4)\binom{7}{4} & = 35 \\ 
        A_{k,1}(4)\binom{7}{4} & = 7 k (k+4) \\
        A_{k,2}(4)\binom{7}{4} & = \frac{7 (k-1) k (k+3) (k+4)}{12} \\
        A_{k,3}(4)\binom{7}{4} & = \frac{(k-2) (k-1) k (k+2) (k+3) (k+4)}{36}
    \end{align*}
    By considering the various cases for $k$ mod $12$ (resp. $36$), we see that
    these expressions are always integers, and hence
    $A_{k,0}(4), A_{k,1}(4), A_{k,2}(4), A_{k,3}(4) \in \frac{1}{35}\Z$.

If $j \geq 4$ then, as in the previous proof, we have
    \begin{align*}
A_{k,j}(4) & = \frac{4!}{(j+4)(j+3)(j+2)(j+1)}\binom{k+4}{j}\binom{k}{j}  \\
           & = \frac{4!}{j(j-1)(j-2)(j-3)}\binom{k}{j-4}\binom{k+4}{j+4}.
    \end{align*}

From equation \eqref{eq:star} or Table \ref{tab:compare} we have that 
    \[ \gcd((j+4)(j+3)(j+2)(j+1), j (j-1)(j-2)(j-3)) 
       \divides 7!/2 \]
Hence we have that $A_{k,j}(4) \in \frac{2 \cdot 4!}{7!} \Z$.
We still need to show that there is an additional factor of $3$ 
    in the numerator.

To prove the result, we need to show that one of three things
    occurs
\begin{itemize}
\item $9 \nmid \gcd((j+4)(j+3)(j+2)(j+1), j (j-1)(j-2)(j-3))$
\item $3 \mid \binom{k+4}{j}\binom{k}{j}$, or  
\item $3 \mid \binom{k}{j-4}\binom{k+4}{j+4}$.
\end{itemize}
If $(j+4)(j+3)(j+2)(j+1) \equiv j(j-1)(j-2)(j-3) \equiv 0 \mod 9$ then 
    $j \equiv 2 \mod 9$ or $j \equiv 6 \mod 9$. 
Hence if $j \equiv 0,1,3,4,5,7,8 \mod 9$ then 
    $A_{k,j}(4) \in \frac{3! \cdot 4!}{7!} \Z$ as required.

If $j \equiv 2 \mod 9$.
    then $27 \nmid (j+1) (j+2) (j+3) (j+4)$ so we have that $9$
    divides the gcd exactly.

Consider
\begin{align}
\binom{k+4}{j}\binom{k}{j}
      & = \frac {f_{a,b}(k,j)} {g_{a,b}(k,j)} \binom{k+a}{j} \binom{k+b}{j}
\end{align}
where $f_{a,b}(k,j)$ and $g_{a,b}(k,j)$ are polynomials.
With careful choices of $a$ and $b$ we can construct $f_{a,b}$ and $g_{a,b}$ 
    such that $f_{a,b}(k,j)$ will have more factors of $3$ than $g_{a,b}$.

For example, if $a = b = 2$ then 
\begin{align*}
f_{2,2}(k,j) & = (k+4) (k+3) (k+2-j) (k-j+1) \\
g_{2,2}(k,j) & = (k-j+4) (k-j+3) (k+2) (k+1)
\end{align*}
Using the fact that $j \equiv 2 \mod 3$, we see that for 
    $k \equiv 0 \mod 3$ that $f_{2,2}(k,j) \equiv 0 \mod 3$ and 
    $g_{2,2}(k,j) \equiv 1 \mod 3$ and hence 
    $\binom{k+4}{j} \binom{k}{j} \equiv 0 \mod 3$.
A similar argument is given for $k \equiv 1 \mod 3$ and $k \equiv 2 \mod 3$,
    summarized in Table \ref{tab:proof1}.
Hence if $j \equiv 2 \mod 9$ then 
    $A_{k,j}(4) \in \frac{3! \cdot 4!}{7!} \Z$ as required.

\begin{table}
\begin{tabular}{l|l|l|l|l|l}
$k$ & $j$ & $a$ & $b$ & & \\ \hline 
$k \equiv 0 \mod 3$ & $j \equiv 2 \mod 3$ & 2 & 2 & $f \equiv 0 \mod 3$ & $g \equiv 1 \mod 3$ \\
$k \equiv 1 \mod 3$ & $j \equiv 2 \mod 3$ & 4 & 1 & $f \equiv 0 \mod 3$ & $g \equiv 2 \mod 3$ \\
$k \equiv 2 \mod 3$ & $j \equiv 2 \mod 3$ & 0 & 3 & $f \equiv 0 \mod 3$ & $g \equiv 2 \mod 3$
\end{tabular}
\caption{Cases when $j \equiv 2 \mod 9$}
\label{tab:proof1}
\end{table}

If $j \equiv 6 \mod 9$
    then $27 \nmid j (j-1) (j-2) (j-3)$ so we have that $9$
    divides the gcd exactly.

Consider
\begin{align}
\binom{k+4}{j+4}\binom{k}{j-4}
      & = \frac {f_{a,b}(k,j)} {g_{a,b}(k,j)} \binom{k+a}{j-4} \binom{k+b}{j+4}
\end{align}

As before, we can break this into cases, as described in Table \ref{tab:proof2}
\begin{table}
\begin{tabular}{l|l|l|l|l|l}
$k$ & $j$ & $a$ & $b$ & & \\ \hline 
$k \equiv 0 \mod 3$ & $j \equiv 0 \mod 3$ & 2 & 4 & $f \equiv 0 \mod 3$ & $g \equiv 2 \mod 3$ \\
$k \equiv 1 \mod 3$ & $j \equiv 0 \mod 3$ & 1 & 3 & $f \equiv 0 \mod 3$ & $g \equiv 1 \mod 3$ \\
$k \equiv 2 \mod 3$ & $j \equiv 0 \mod 3$ & 0 & 2 & $f \equiv 0 \mod 3$ & $g \equiv 2 \mod 3$ \\
\end{tabular}
\caption{Cases when $j \equiv 6 \mod 9$}
\label{tab:proof2}
\end{table}
\end{proof}

\section{Additional support for Conjecture \ref{conj:main}}
\label{sec:evidence}

We have computationally checked that 
    for all $k, j, \nu \leq 200$ that Conjecture \ref{conj:main} holds,
Further, using the techniques of Theorems \ref{thm:nu=3} and \ref{thm:nu=4}
    we have computationally verified that for all $j, \nu \leq 15$ 
    and all $k$ that Conjecture \ref{conj:main} holds.
It is not unreasonable to think that Conjecture \ref{conj:main} can hold in general. 
Indeed, if we plot the non-integer entries in the lower triangular part of $A(\nu)$ and colour them based on the prime factorization of their denominators in reduced form we obtain the fractal pattern seen in Figure (\ref{frac_fig}).
This suggests that there is far more structure to the matrix $A(\nu)$ that
    we are currently exploiting.
We note that from equation $\eqref{eq:star}$ combined with Theorem 
    \ref{thm:main} we would be able to prove that 
    $r_5 | 2^3 \cdot 3^2 \cdot 7$.
We conjecture that $r_5 = \binom{7}{4} = 2 \cdot 3^2 \cdot 7$.
In this image of $A(5)$, denominators are coloured red for 2, blue for 3, green for 7 and orange for $3^2$.
If the denominator had contained any additional factors of $2$, $3$ or $5$
    then we would have coloured this value black. 
None occurred.
Assuming that primes always give rise to the associated fractals early on, 
    as seen in Figure \ref{frac_fig}, we would be led to believe that 
    $4 \nmid r_5$.

\begin{figure}
    \centering
    \includegraphics[scale=0.23]{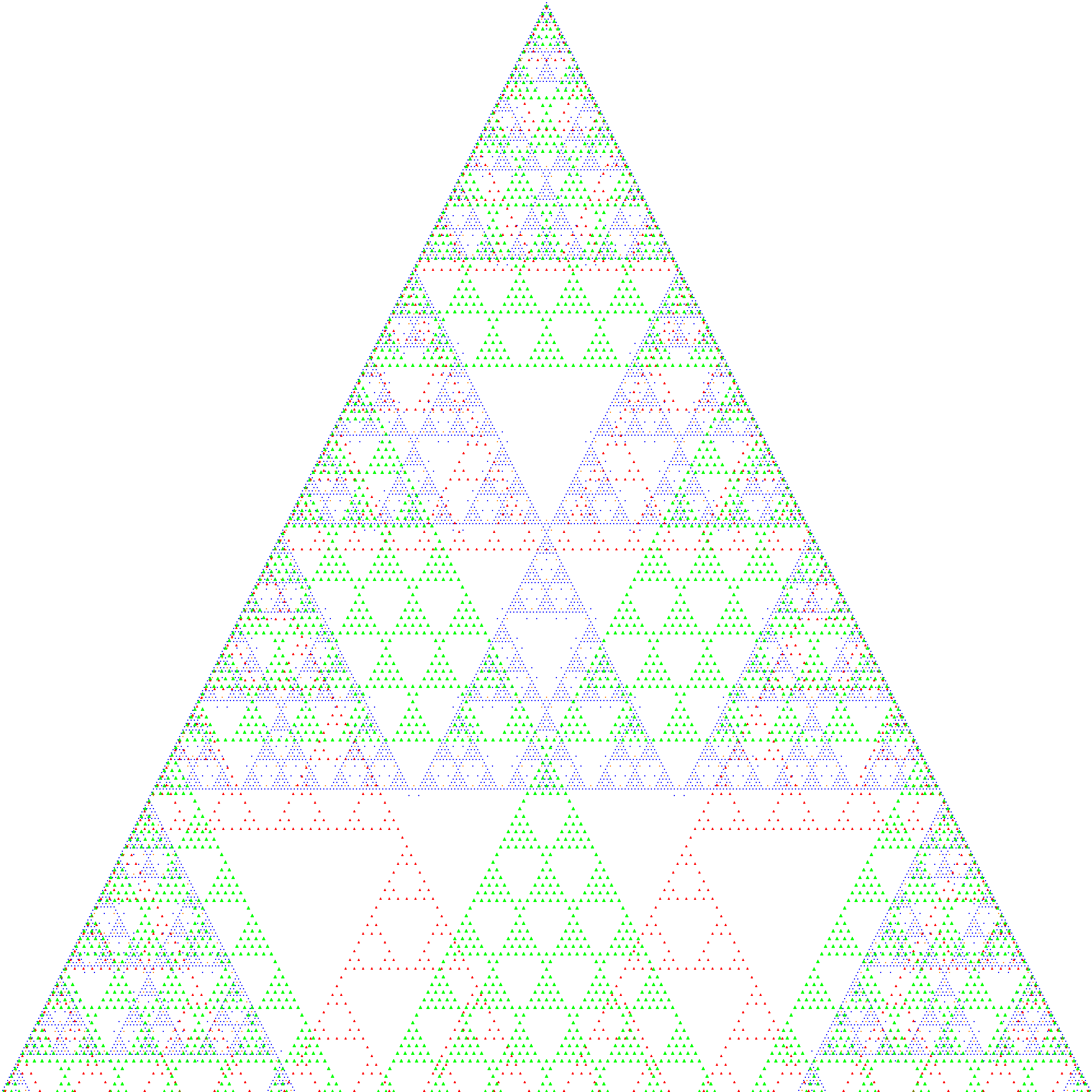}
    \caption{Non-integer entries of the first 1000 rows of $A(5)$}
    \label{frac_fig}
\end{figure}

\section{Proof of Theorem \ref{thm:mod}: $W_n(v;2k) \equiv n$}
\label{sec:mod}

\begin{proof}[Proof of Theorem \ref{thm:mod}]
We rewrite (\ref{moment_def}) as
    \begin{equation*}
        W_{n}(\nu;2k) = \sum_{k_1 + \cdots + k_n = k} \frac{k! \cdot (k+\nu)! \cdot \nu!^{n-1}}
                             {k_1! \cdots k_n! \cdot (k_1+\nu)! \cdots (k_n+\nu)!}
    \end{equation*}
Let $p = k$ be prime with $2 \nu < p$
    or let $p = k + \nu$ be prime with $\nu < p$.
We claim that there does not exist indices $1 \leq i < j \leq n$ such that $k_i+\nu \geq p$ and $k_j+\nu \geq p$. 
Indeed, this would lead to
    \begin{equation*}
        2p \leq k_i + k_j + 2\nu \leq (k_1+\cdots+k_n) + 2\nu = k + 2\nu.
    \end{equation*}
If $p = k$ then $2 \nu < p$ by assumption and hence $2p \leq k + 2 \nu < 2p$, a contradiction.
If $p = k+\nu$ then $\nu < p$ by assumption and hence $2p \leq (k+\nu) + \nu < 2p$, a contradiction.

If instead $k = p^2$ and $\nu = 0$ it is easy to see that 
    there does not exist indices $1 \leq i < j \leq n$ such that $k_i+\nu \geq p^2$ and $k_j+\nu \geq p^2$. 

We consider 2 cases:

    If there exists $1 \leq i \leq n$ such that $k_i = k$ then clearly $k_j = 0$ for $j \neq i$ and hence
    \begin{equation*}
         \frac{k! \cdot (k+\nu)! \cdot \nu!^{n-1}}{k_1! \cdots k_n \cdot (k_1+\nu)! \cdots (k_n+\nu)!}
             = \frac{k! \cdot (k+\nu)! \cdot \nu!^{n-1}}{k! \cdot 0! \cdots 0! \cdot (k+\nu)! \cdot \nu! \cdots \nu!} = 1
    \end{equation*}

Assume that $k_i < k$ for all $1 \leq i \leq n$.

If $p = k$ we see that $p | k!$ and $p | (k+\nu)!$. 
We further see that at most one term in the denominator is divisible by $p$.
Hence 
    \begin{equation*}
        \frac{k! \cdot (k+\nu)! \cdot \nu!^{n-1}}{k_1! \cdots k_n! \cdot (k_1+\nu)! \cdots (k_n+\nu)!}
    \end{equation*}
can be written as $p \frac{a}{b}$ where $p \nmid b$, and thus is equivalent to $0$ mod $p$.

If $p = k+\nu$ we see that $p | (k+\nu)!$.
We further see that no term in the denominator is divisible by $p$.
Hence 
    \begin{equation*}
        \frac{k! \cdot (k+\nu)! \cdot \nu!^{n-1}}{k_1! \cdots k_n! \cdot (k_1+\nu)! \cdots (k_n+\nu)!}
    \end{equation*}
can be written as $p \frac{a}{b}$ where $p \nmid b$, and thus is equivalent to $0$ mod $p$.

If $p^2 = k$ and $\nu = 0$ we see that $p^{p+1} | k!$ and 
    $p^{p+1} | (k+\nu)!$.
We further see that we have at most $2 p$ factors of $p$ in the 
    denominator, with equality only if $p | k_i$ for all $i$.
Hence 
    \begin{equation*}
        \frac{k! \cdot (k+\nu)! \cdot \nu!^{n-1}}{k_1! \cdots k_n! \cdot (k_1+\nu)! \cdots (k_n+\nu)!}
    \end{equation*}
can be written as $p^2 \frac{a}{b}$ where $p \nmid b$, and thus is equivalent to $0$ mod $p^2$.

    Thus there are only $n$ terms in the sum for $W_{n}(\nu;2k)$ which are not $0 \mod p$ (resp $0 \mod p^2$), namely when $k_i = k$ for some $k$.  
In this case the term is $1 \mod p$ (resp $1 \mod p^2$) hence
    \begin{equation*}
        W_{n}(\nu;2k) \equiv n \mod p \ \ \ \ 
        (\mathrm{resp.}\ W_{n}(0;2k) \equiv n \mod p^2)
    \end{equation*}
\end{proof}

\section{Comments}
\label{sec:conc}

We showed in Section \ref{sec:small} that Conjecture \ref{conj:main} held
    for the case $\nu = 3$ and $\nu = 4$.
It is probably that this technique could be extended computationally for 
    any fixed $\nu$, although this is not clear.
It is not clear that this technique would be extendable to arbitrary $\nu$ 
    without additional ideas.

In Section \ref{sec:mod} we showed how the ideas of modularity of 
    $W_n(\nu; k)$ could be extended to $k = p^2$ or $\nu > 0$.
It appears that something is also happening in the case when
    $k = p^2 \neq 4$ and $\nu = 1$, although it is unclear how 
    one would prove this.
There are most likely many other relations that can be found when considering
    $W_n$ modulo a well chosen prime power.

\section{Acknowledgements}

The authors would like to thank Jon Borwein for many useful discussions and 
    suggestions, without which this paper would not have been possible.

\end{document}